\DeclareMathOperator{\Fl}{Fl}
\providecommand{\abso}[1]{\left\lvert#1\right\rvert}
\newcommand{\coleq}{\mathrel{\mathop:}=}
\newtheorem{theorem}{\bf Theorem}
\newtheorem{definition}[theorem]{\bf Definition}
\newtheorem{corollary}[theorem]{\bf Corollary}
\newtheorem{proposition}[theorem]{\bf Proposition}
\newtheorem{remark}[theorem]{\bf Remark}
\newtheorem{lemma}[theorem]{\bf Lemma}
\newcommand{\e}{\varepsilon}
\def\sk{\omega}
\def\SK{{\mathrm{SK}}}
\def\TO{{\mathrm{TO}}}
\newtheorem{thr}{\hspace*{-1.1mm}}[section]
\newcommand{\bt}{\begin{thr} {\bf Theorem }}
\newcommand{\et}{\end{thr}}
\newcommand{\bp}{\begin{thr} {\bf Proposition }}
\newcommand{\bc}{\begin{thr} {\bf Corollary }}
\newcommand{\blem}{\begin{thr} {\bf Lemma }}
\newcommand{\bex}{\begin{thr} {\bf Example }\rm}
\newcommand{\bexs}{\begin{thr} {\bf Examples }\rm}
\newcommand{\bd}{\begin{thr} {\bf Definition }}
\newcommand{\br}{\begin{thr} {\bf Remark}}
\def\sup{\mathop{\textrm sup}\limits}
\def\MC{\tilde{\mathcal{A}}}  
\def\SK{{\rm SK}}
\newcommand{\cT}{\mathcal{T}}
\def\st#1#2{{}^{#1}_{#2}}         
\def\gt#1#2#3#4{\st{#1\ldots#2}{#3\ldots#4}}
\def\half{{\scriptstyle{1 \over 2}}}
\let\eps=\varepsilon              
\let\le=\leqslant
\def\Real{{\Bbb R}}               
\def\Nat{{\Bbb N}}                
\def\tR{{\tilde R}}
\def\cD{{\mathcal{D}}}               
\def\tCD{{\tilde{\mathcal{D}}}}      
\def\TM{{\frak X}}              
\def\tensor{{\mathcal{T}}}           
\def\CE{{\mathcal{E}}}               
\newcommand{\atil}{\ensuremath{\tilde{\mathcal{A}}_0(M)} } 
\def\CEM{\hat \CE}              
\def\CM{{{\hat{\mathcal{E}}_M}}}       
\def\CN{\hat{\mathcal{N}}}               
\def\CG{\hat{\mathcal{G}}}          
\def\hCG{\check{\mathcal{G}}}          
\def\delo{{\mathop{\nabla}\limits^0}} 
\def\Lie{{\mathcal{L}}}              
\def\gLie{\hat \Lie}        
\def\dLie{\Lie}             
\def\transp{\Upsilon}           
\def\transpp{\Xi}           
\def\emph#1{{\it#1}}
\def\k{{\boldsymbol k}}
\def\l{{\boldsymbol l}}
\newcommand{\Ll}{L_{\mbox{\rm\small loc}}}
\newcommand{\Hl}{H_{\mbox{\rm\small loc}}}
\def\form{\Omega^1(M)}
\def\vect{\TM}
\def\Riem{{\rm Riem}}
\begin{document}

\title[nonlinear distributional geometry]
{A nonlinear theory of distributional geometry}

\author{E.~A.~Nigsch}
\address{E.~A.~Nigsch, Institut f\" ur Mathematik, Universit\" at Wien, Vienna, Austria}
\email{eduard.nigsch@univie.ac.at}

\author{J.~A.~Vickers}
\address{J.~A.~Vickers, School of Mathematics, University of Southampton, Southampton SO17 1BJ, UK}
\email{J.A.Vickers@soton.ac.uk}

\subjclass[2010]{46F30, 46T30}

\keywords{nonlinear generalised functions, tensor fields, distributional covariant derivative, distributional geometry, Colombeau algebra, diffeomorphism invariant}

\label{firstpage}

\begin{abstract}
  This paper builds on the theory of generalised functions begun in
  \cite{paper1}. The Colombeau theory of generalised scalar fields on
  manifolds is extended to a nonlinear theory of generalised tensor
  fields which is diffeomorphism invariant and has the sheaf property. The generalised Lie derivative for generalised tensor fields
  is introduced and it is shown that this commutes with the embedding
  of distributional tensor fields. It is also shown that the covariant
  derivative of generalised tensor fields commutes with
  the embedding at the level of association. The concept of
  generalised metric is introduced and used to
  develop a nonsmooth theory of differential geometry.  It is shown
  that the embedding of a continuous metric results in a generalised metric
  with well defined connection and curvature.  It is also shown that a
  twice continuously differentiable metric which is a solution of the vacuum Einstein equations
  may be embedded into the algebra of generalised tensor fields and has generalised
  Ricci curvature associated to zero. Thus, the embedding preserves
  the Einstein equations at the level of association. Finally, we
  consider an example of a metric which lies outside the
  Geroch-Traschen class and show that in our diffeomorphism invariant
  theory the curvature of a cone is associated to a delta function.
\end{abstract}

\maketitle

\section{Introduction}

In a previous paper \cite{paper1} we introduced a global theory of
generalised functions on a manifold $M$. The key idea was to replace a
nonsmooth function $f$ by 1-parameter families of
smooth functions according to 
\begin{equation}\label{alpha} \tilde
    f_\e(x)=\int_Mf(y)\sk_{x, \e}(y) ,
  \end{equation}

depending on a suitable family of smoothing kernels $(\sk_\e)_\e$.
For fixed $\e$ these may be treated just like smooth functions on manifolds
so all the standard operations that may be carried out on smooth
functions extend to the smoothed functions $\tilde f_\e$. The embedding \eqref{alpha}
extends to distributions $T \in \cD'(M)$ by defining
\begin{equation} \label{embed}
\tilde T(\sk_\e)(x) = \langle T, \sk_{x,\e} \rangle.
\end{equation}
By introducing certain asymptotic conditions on the basic space of
generalised functions one may define the spaces of moderate and negligible
generalised functions and hence obtain the Colombeau algebra of
generalised functions on $M$ as the quotient
$\CG(M)=\CM(M)/\CN(M)$. The algebra of generalised functions $\CG(M)$
contains the space of smooth functions as a subalgebra and the space of
distributions as a canonically embedded linear subspace. We also
introduced both the generalised Lie derivative and the covariant
derivative of generalised scalar fields on $M$. The generalised Lie
derivative commutes with the embedding while the covariant derivative
commutes at the level of association.

For applications of these ideas to general relativity we are
interested in looking at Einstein's equations for metrics of low
differentiability, which are tensorial rather than scalar
objects. Because the embedding into the algebra does not commute with
multiplication (except in the smooth case) one cannot simply work with
the coordinate components of a tensor and use the theory of
generalised scalars.

In section 2 we show how it is possible to define an algebra of
generalised tensor fields on a manifold which contains the algebra of smooth
tensor fields as a subalgebra and has a canonical coordinate
independent embedding of the spaces of $(r,s)$-tensor distributions as
linear subspaces. In section 3 we look at the
embedding of distributional tensor fields into the algebra of
generalised tensor fields.  In section 4 the generalised
Lie derivative is introduced and it is shown that it commutes with the
embedding.  In section 5 we use the theory described earlier to
develop a nonlinear theory of distributional geometry and briefly look
at applications to general relativity in section 6. The covariant
derivative of a generalised tensor field is introduced and it is shown
that this commutes with the canonical embedding at the level of
association. We then consider generalised metrics and show that the
embedding of a $C^0$ metric results in a generalised metric with well
defined connection and curvature. We also show that if one embeds a
$C^2$ vacuum metric into the Colombeau algebra then its generalised
Ricci curvature vanishes at the level of
association. Finally we look at an example of a metric for which it is
not possible to define the curvature using conventional distributions
and show that the generalised Einstein tensor of a cone is associated
to a distributional energy momentum tensor in a canonical and
coordinate independent manner.

We will continue to use the notation of \cite{paper1}. In particular, $\mathfrak{X}$ and $\Omega^p(M)$ denote the spaces of smooth vector fields and $p$-forms, respectively. A distributional tensor field may be regarded as simply a tensor field with distributional coefficients. However we prefer to follow \cite{marsden} and adopt a global description in which type $(r,s)$ tensor fields are regarded as dual to type $(s,r)$ tensor densities. We denote the space of compactly supported type $(s,r)$ tensor densities $\tilde\cD^s_r(M)$ and denote the space of type $(r,s)$ tensor distributions $\cD'^r_s(M)$. We let $\tilde\cD(M) = \tilde \cD^0_0(M)$ denote the space of densities. Note that on an orientable manifold scalar densities are equivalent to $n$-forms so in the scalar case what we do here is consistent with \cite{paper1}.

\section{The algebra of generalised tensor fields}
In this section we will extend the theory of generalised scalar
fields on a manifold $M$ presented in \cite{paper1} to vectors,
covectors and more general tensor fields. Before giving the precise
definitions we motivate these by looking at the smoothing of
continuous (or more generally locally integrable) tensor fields on $M$
by integration.

Given a scalar field $f \in C^0(M)$ we may define a smooth scalar field $\tilde f_\e$ by \eqref{alpha}.
Unfortunately this does not make sense if we replace $f$ by a vector field $X$. One obvious possibility is to work in some local coordinate system and define (leaving out the $\e$ for the moment)
\[ \tilde X^a(x)=\int_M X^a(y) \sk_{x}(y). \]
However, if we transform to a new coordinate system $x'$ and then smooth we find
\[
{\tilde X}^{a'}(x)=\int_M {{\partial x^{a'}} \over {\partial x^b}}(y)X^b(y) \sk_x(y)
\]
which in general is not equal to
\[
{{\partial {x^a}'} \over {\partial x^b}}(x){\tilde X}^b(x)={{\partial {x^a}'} \over {\partial x^b}}(x)\int_MX^b(y) \sk_x(y).
\]
The reason for the problem is that we are attempting to integrate the components of a vector at different
points (see \cite{VW2} for details). To make such an integral well
defined in a coordinate invariant way we need to prescribe some additional structure which
enables us to compare tangent spaces at different points of the
manifold.

Let $\transp(x,y) \in T_xM \otimes T_y^*M$ be a two point tensor that 
depends smoothly on $x$ and $y$. More precisely, $\transp$ is an element of $\TO(M) \coleq \Gamma(M \times M, TM
\boxtimes T^*M)$ and will be called \emph{transport operator} (see \cite{advances2} and \cite{distcurv} for
details).

For $x,y \in M$, $\transp$ defines a map
\[ \transp^*(x,y) \colon T_x^*M \to T_y^*M \]
which may be written using the abstract index convention of \cite{penrose} as
\[ \omega_a \mapsto {\transp}^a{}_b(x,y) \omega_a.  \]
Contracting $\transp^a{}_b(x,y)$ with a covector $\omega_a$ in $T_x^*M$
hence gives an element of $T_y^*M$. 
We may also use $\transp$ to define a map
\[ \transp_*(x,y) \colon T_xM \to T_yM \]
by the assignment
\[ X^a \mapsto {\transp}^a{}_b(y,x)X^b = \transp_b{}^a(x,y)X^b \]
where we set $\transp_b{}^a(x,y) \coleq \transp^a{}_b(y,x)$.

By taking suitable tensor products $\transp$ may be used to transport arbitrary tensors from $x$ to $y$.
Note that the transport operators we will use in the development of the theory typically satisfy $\transp^a_b = \delta^a_b$, hence $\transp^*$ and $\transp_*$ are the identity on the diagonal and invertible in a neighborhood of it.

We are now in a position to describe the smoothing of a locally integrable vector field $X$. Let $x \in M$, $\transp$ be a transport operator and let $\sk \in \SK(M)$ be a smoothing kernel; then we define $\tilde X(x)$
by its action on covectors $\alpha \in T_x^*M$ (which may be written using
the abstract index convention) as
\[ \tilde X^a(x)\alpha_a=\int_{y \in M} \alpha_a{\transp}^a{}_b(x,y)X^b(y)\sk_x(y). \]
Note that $\alpha_a{\transp}^a{}_b(x,\cdot)X^b(\cdot)$ is a scalar field on $M$ which may be smoothed by integrating against $\sk_x$.

Similarly, in order to smooth a locally integrable covector field $\beta$ we consider its action on vectors $Y \in T_xM$ and use the transport operator to extend this to a vector field. Thus,
\[ \tilde \beta_a(x)Y^a=\int_{y \in M} Y^a{\transp}_a{}^b(x,y)\beta_b(y)\sk_x(y). \]

Using the same strategy we can smooth a general locally integrable type $(r,s)$ tensor field $S$ by defining $\tilde S$ according to
\begin{equation}
\begin{aligned}
\tilde S\gt{a_1}{a_r}{b_1}{b_s}(x)&=
\int_M S\gt{c_1}{c_r}{d_1}{d_s}(y){\transp}^{a_1}{}_{c_1}(x,y)\dots \\
&\qquad {\transp}^{a_r}{}_{c_r}(x,y){\transp}_{b_1}{}^{d_1}(x,y)\dots
{\transp}_{b_s}{}^{d_s}(x,y) \sk_x(y)
\label{46}
\end{aligned}
\end{equation}
which with some changes of notation is the formula given in \cite{VW2}.

A natural way of obtaining such transport operators is by using a
background connection $\gamma$. If we choose $U$ to be some
geodesically convex neighbourhood for $\gamma$ (i.e., an open set such
that every pair of points in $U$ can be connected by a unique geodesic
lying in the set) then we may define $\transp_*(x,y)$ to be given by
parallel transport of vectors along the geodesic connecting $x$ to
$y$. Note that for such a transport operator for $x \in U$,
${\transp}_a{}^b(x,x)=\delta^a_b$ (which ensures that $\tilde X^a(x)
\to X^a(x)$ as $\eps \to 0$). Unfortunately, such a transport operator
is only defined for $(x,y) \in U \times U$. However, using a partition
of unity we may define a global transport operator which is determined
by $\gamma$ in the above way in a neighborhood of the diagonal.

We are now in a position to define the basic tensor space that we will use to define generalised tensor field on manifolds. 

\begin{definition}[Basic tensor space] \label{basictensors}
The basic space $\CEM^r_s(M)$ of type $(r,s)$ generalised tensor fields consists of all maps
\[ S \colon \TO(M) \times \SK(M) \to \cT^r_s(M) \]
such that $S(\transp, \sk)$ depends smoothly on $\Upsilon \in \TO(M)$ and $\sk \in \SK(M)$, where $\SK(M)=C^\infty(M,\tilde\cD(M))$ is the space of smoothing kernels \cite[Definition 13]{paper1} and the smoothness with respect to $\omega$ and $\Upsilon$ is defined using the definitions of \cite{KM}.
\end{definition}

Note that as in our previous paper, for the sake of presentation we completely omit discussion of the sheaf property; to obtain it we actually would have to restrict the basic space to a somewhat smaller one. For details, we refer to \cite{distcurv,specfull}.

Before going on to define moderate and negligible generalised tensor fields we will look at the properties of the basic space $\CEM^r_s(M)$.

\section{Embedding distributional tensor fields}

In this section we will discuss the embedding of distributional tensor fields into the space of generalised tensor fields. We have already given the basic construction for the embedding of a continuous tensor field $S$ in equation \eqref{46}. We now turn to the case of a distributional tensor field $T$. 

Given a type $(r,s)$ distributional tensor field $T \in \cD'^r_s(M)$, a smoothing kernel $\sk_x(y)$ and  a transport operator $\transp \in \TO(M)$ we may define a smooth tensor field $\tilde T(\transp, \sk) \in \cT^r_s(M)$ according to
\begin{equation}
\tilde T(\transp, \sk)(x)( \alpha^1\dots\alpha^r,Y_1\dots Y_s) =\langle T, \Psi_x \rangle \label{49}
\end{equation}
where $\alpha^j \in T^*_xM$ for $j=1 \dots r$, $Y_k \in T_xM$ for $k=1 \dots s$ and $\Psi_x(y)$ is the type $(s,r)$ tensor density given by
\begin{equation}\label{beta}
\begin{aligned}
\Psi_x(y) = & (\transp^*(x,y)\alpha^1(x))\otimes\dots \otimes(\transp^*(x,y)\alpha^r(x))\otimes \\
& (\transp_*(x,y)Y_1(x))\otimes\dots \otimes(\transp_*(x,y)Y_s(x))\otimes\sk_x(y).
\end{aligned}
\end{equation}
The above formula (\ref{49}) therefore gives a canonical embedding
\begin{align*}
\iota^r_s \colon \cD'^r_s(M) &\to \CEM^r_s(M), \\
T &\mapsto \tilde T.
\end{align*}
It can be shown that $\tilde T(\transp, \sk)$ depends smoothly on the smoothing kernel $\sk$ and on the choice of transport operator $\transp$. Actually, the seemingly innocuous statement that this mapping is smooth is far from trivial to prove and is considered in detail in \cite{distcurv,vecreg}.

Note that in order for $\tilde T(\transp, \sk)$ as given by \eqref{49} to be a tensor field, $\Psi_x$ as given by \eqref{beta} needs to be linear in the $\alpha$ and $Y$ which requires that $\transp^*(x,y) \colon T^*_xM \to T^*_yM$ and $\transp_*(x,y) \colon T_xM \to T_yM$ are linear maps.

It is also clear that if $T \in \tensor^r_s(M)$ is a {\it smooth} type $(r,s)$ tensor field then setting
\begin{equation}
\hat T\gt{a_1}{a_r}{b_1}{b_s}(\transp, \sk)= T\gt{a_1}{a_r}{b_1}{b_s}
\end{equation}
gives an embedding
\begin{align*}
\sigma^r_s &\colon  \tensor^r_s(M) \to \CEM^r_s(M), \\
T & \mapsto \hat T.
\end{align*}

We have seen that by combining a transport operator with a smoothing kernel we may smooth tensor distributions. It is remarkable that all linear and continuous mappings from $\cD'^r_s(M)$ into $\cT^r_s(M)$ are of this form in the following sense: there is an isomorphism of locally convex spaces
\begin{gather*}
 L(\cD'^r_s(M), \cT^r_s(M)) \cong \\
 \Gamma(M \times M, T^r_sM \boxtimes T^s_rM) \otimes_{C^\infty(M \times M)} L(\cD'(M), C^\infty(M)),
\end{gather*}

see \cite{vecreg}. For our purposes, elements of $\Gamma(M \times M, T^r_sM \boxtimes T^s_rM)$ are constructed by taking tensor products of $\transp \in \Gamma(M \times M, TM \boxtimes T^*M)$ as in \eqref{46}.
\section{Generalised Lie derivatives}
In this section we consider the Lie derivative of generalised tensor fields. Before doing so we review the definition of the Lie derivative of a distributional vector field as given by  \cite{marsden} (see also \cite{book}). We begin by looking at the Lie derivative of a distributional vector field $X$. If we let $\theta$ be an arbitrary smooth 1-form  then $X(\theta)$ is a distributional scalar field. We now {\it define} the distributional Lie derivative of $X$ with respect to a smooth vector field $Z$ to be that given by requiring the Leibniz rule for $X(\theta)$ to be satisfied, so that
\begin{equation*}
(\Lie_ZX)(\theta):= \Lie_Z(X(\theta))-X(\Lie_Z\theta)
\qquad \forall \theta \in \Omega^1(M).
\end{equation*}
We now define the distributional Lie derivative of a general distributional tensor field $S$.

\begin{definition}[Lie derivative of tensor fields]
Let $S \in \cD'^r_s(M)$. The Lie derivative of $S$ with respect to the smooth vector field $Z \in \TM(M)$ is the element $\Lie_ZS \in \cD'^r_s(M)$ given by
\begin{align*}
\langle(\Lie_ZS)(\theta^1,&\dots,\theta^r,X_1,\dots,X_s),\omega \rangle
= -\langle S(\theta^1,\dots,\theta^r,X_1,\dots,X_s),\Lie_Z\omega \rangle \\
&-\sum_{i=1}^r\langle S(\theta^1,\dots,\Lie_Z\theta^i,\dots
\theta^r,X_1,\dots,X_s),\omega \rangle \\
&-\sum_{j=1}^s\langle S(\theta^1,\dots,\theta^r,
X_1,\dots,\Lie_ZX_j,\dots,X_s),\omega \rangle \\
\end{align*}
for all $\theta^1,\dots,\theta^r \in \Omega^1(M)$, $X_1,\dots,X_s \in \TM(M)$ and $\omega \in \Omega^n_c(M)$.
\end{definition}
Note that if we regard $S \in \cD'^r_s(M)$ as dual to a type $(s,r)$ tensor density $\Psi$ given by
\[ \Psi=\theta^1\otimes\cdots\otimes\theta^r \otimes X_1\cdots\otimes X_s\otimes\omega \]
then the above formula can be written in the more compact form
\begin{equation*}
\langle \Lie_ZS, \Psi\rangle=-\langle S, \Lie_Z\Psi\rangle.
\end{equation*}

In \cite{paper1} we looked at derivatives of a generalised scalar field. This involved defining the Lie derivative $\Lie^{SK}_X \sk = \Lie_X^{\Omega^n}\sk + \Lie_X^{C^\infty} \sk$ of a smoothing kernel which we obtained by differentiating the action of a 1-parameter group of diffeomorphisms. For derivatives of a generalised tensor field $T(\transp, \sk)$ we will also require the Lie derivative of the transport operator $\transp$. 

In principle we can consider the action of two diffeomorphisms $\mu$ and $\nu$ which act separately on the $x$ and $y$ variables. Thinking of $\transp$ as a sum of terms of the form $V^a(x)\otimes \alpha_b(y)$ we can consider the pullback $\mu^*=(\mu_*)^{-1}$ by taking the inverse of the pushforward action on the vector  $V^a(x)$ and the pullback $\nu^*$ by taking the pullback action on the 1-form $\alpha_b(y)$. This gives us the action 
\[ (\mu^*,\nu^*) \colon \TO(M) \to \TO(M). \]
We can also consider two vector fields $X$ and $Y$ with corresponding flows ${\mathrm{Fl}}^X_t$ and ${\mathrm{Fl}}^Y_t$ acting on the $x$ and $y$ variables. This enables us to define the Lie derivative of $\transp$ by
\[
\dLie_{(X,Y)}\transp=\left.\frac{d}{dt}\right|_{t=0}
\left((\mathrm{Fl}_t^X)^*,(\mathrm{Fl}_t^Y)^*\right)\transp.
\]
Varying the $x$ and $y$ variables separately we have two Lie derivatives $\dLie_{(X,0)}$ and $\dLie_{(0,Y)}$ 
satisfying
\[
\dLie_{(X,Y)}\transp=\dLie_{(X,0)}\transp +\dLie_{(0,Y)}\transp.
\]
We abbreviate $\dLie_X^{TO}\transp \coleq \dLie_{(X,X)} \transp$.

The explicit formulae are given by
\[
(\Lie_{(X,0)}\transp)^a{}_b(x,y)=X^c(x)\frac{\partial \transp^a{}_b(x,y)}{\partial x^c}
-\transp^c{}_b(x,y)\frac{\partial X^a}{\partial x^c}(x)
\]
and
\[
(\Lie_{(0,Y)}\transp^a{}_b)(x,y)=Y^c(y)\frac{\partial \transp^a{}_b(x,y)}{\partial y^c}
+\transp^a{}_c(x,y)\frac{\partial Y^c}{\partial y^b}(y)
\]
so that $(\Lie_{(X,0)}\transp)(x,y)$ corresponds to the Lie derivative of the
transport operator with respect to the vector field $X$ at $x$ keeping
$y$ fixed (i.e. thinking of $\transp(x,y)$ as a vector field at
$x$). Similarly, $(\Lie_{(0,X)}\transp)(x,y)$ corresponds to the Lie derivative
of the transport operator with respect to the vector field $Y$ at $y$
keeping $x$ fixed (i.e. thinking of $\transp(x,y)$ as a covector field
at $y$).

Having calculated the Lie derivative of the transport operator we are
now in a position to look at the Lie derivative of a generalised
tensor field. Given a generalised tensor field $T \in \CEM^r_s(M)$
then for fixed $\sk \in \SK(M)$ and fixed $\transp \in \TO(M)$ we
know that $\tilde T:= T(\transp, \sk)$ is a smooth type $(r,s)$
tensor field and hence we may calculate its (ordinary) Lie derivative
with respect to a smooth vector field $X \in \vect(M)$.

As in the case of the generalised Lie derivative of a scalar field, we
find the correct definition for the Lie derivative of a generalised
tensor field $T$ with respect to a vector field $X$ by differentiating
the pullback of $T$ along the flow of $X$, i.e., $(\Fl^X_t)^* T$, at
time $t=0$. This leads to the following definition:

\begin{definition}[Generalised Lie derivative of tensors] \label{genlietensors}
For $T \in \CEM^r_s(M)$ and $X \in \mathfrak{X}(M)$ we define
\begin{equation} \label{genlietensor}
(\gLie_X T)(\transp,\sk) := \Lie_X(T(\transp, \sk))
- d_1T(\transp, \sk)(\Lie_X^{TO}\transp)
- d_2T(\transp, \sk)(\Lie_X^{SK} \sk).
\end{equation}
\end{definition}

As in the scalar case, $\mathrm{d}_i$ denotes the differential with respect to the $i$th variable in the sense of \cite{KM}.

\begin{remark}
  Formula \eqref{genlietensor} must also be used for {\it scalar} fields which
  have an $\transp$ dependence. Such fields may arise, for example,
  from the contraction of a generalised vector field with a
  1-form. For scalar fields with no $\transp$ dependence the above
  formula reduces to that given in \cite{paper1} for generalised
  scalar fields.
\end{remark}

We now give an explicit formula for the Lie derivative of an embedded vector field $Y$. For notational ease we first consider the special case where $Y$ is continuous so that we do not have to consider distributional derivatives. The embedded vector field is given by
\[
\iota^1_0(Y)(\transp, \sk)^a= \int_{y \in M}  Y^b(y) {\transp}^a{}_b(x,y)\sk_x(y).
\]
Taking the generalised Lie derivative of this according to Definition \ref{genlietensors} gives

\begin{align*}
\gLie_X(\iota^1_0(Y))(\transp,\sk)^a(x) &=
\int_{y \in M}Y^b(y)\bigl((\dLie_{(X,0)}\transp^a{}_b)(x,y)\sk_x(y) \\
& \qquad +
\transp^a{}_b(x,y)(\dLie^{C^\infty}_X\sk)_x(y)\bigr) \\
&\quad -\int_{y \in M}Y^b(y)(\Lie_X^{TO}\transp^a{}_b)(x,y)\sk_x(y) \\
&\quad -\int_{y \in M}Y^b(y)\transp^a{}_b(x,y)(\Lie^{SK}_X\sk)_x(y) \\
&=-\int_{y \in M}Y^b(y)(\dLie_{(0,X)}\transp^a{}_b)(x,y)\sk_x(y) \\
&\quad -\int_{y \in M} Y^a(y)\transp^a{}_b(x,y)(\Lie_X^{\Omega^n}\sk)_x(y) \\
&=\int_{y \in M} (\Lie_XY)^b(y)\transp^a{}_b(x,y)\sk_x(y)\\
&= \iota^1_0(\Lie_XY)(\sk,\transp)^a(x),
\end{align*}
hence $\gLie_X(\iota^1_0(Y)) = \iota^1_0(\dLie_X Y)$.

Turning now to the general case a similar calculation to the above shows that for a distributional tensor field $S \in \cD'^r_s(M)$ we have
$\gLie_X(\iota^r_s(S))=\iota^r_s(\Lie_XS)$.

For a smooth type $(r,s)$ tensor field $S$ we have $\sigma^r_s(S)(\transp, \sk)= S$ and since there is no dependence on the smoothing kernel or
transport operator the generalised Lie derivative is the same as the
ordinary Lie derivative so that $\gLie_Z(\sigma^r_s(S)) = \sigma^r_s(\Lie_ZS)$.

Combining these two results we have
\begin{proposition}\label{prop4}
\leavevmode
\begin{enumerate}[label=(\alph*)]
\item The embedding $\iota^r_s$ of distributional tensor fields 
commutes with the Lie derivative so
that
\[ \gLie_X(\iota^r_s(S))=\iota^r_s(\Lie_X S). \]
\item The embedding $\sigma^r_s$ of smooth tensor fields commutes with the Lie derivative so that
\[ \gLie_X(\sigma^r_s(S))=\sigma^r_s(\Lie_X S). \]
\end{enumerate}
\end{proposition}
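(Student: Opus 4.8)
The plan is to reduce Proposition~\ref{prop4} to the two computations sketched immediately before it, which already contain all the essential work. Part (b) is effectively free: for a smooth tensor field $S$ the embedding $\sigma^r_s(S)(\transp,\sk)=S$ carries no dependence on either $\transp$ or $\sk$, so both correction terms $d_1T(\transp,\sk)(\dLie_X^{TO}\transp)$ and $d_2T(\transp,\sk)(\Lie_X^{SK}\sk)$ in \eqref{genlietensor} vanish identically. The generalised Lie derivative then collapses to the ordinary Lie derivative $\Lie_X S$, which is exactly $\sigma^r_s(\Lie_X S)$ since the latter again has no $(\transp,\sk)$ dependence. I would state this in one or two lines.

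For part (a) the strategy is to verify the identity $\gLie_X(\iota^r_s(S))=\iota^r_s(\Lie_X S)$ by pairing both sides against an arbitrary test object and reducing to the continuous vector-field calculation already displayed. First I would treat the continuous case in full generality: writing out $\iota^r_s(S)(\transp,\sk)$ via \eqref{46} and applying Definition~\ref{genlietensors}, the ordinary Lie derivative $\Lie_X$ hits each of the $2r+2s+1$ factors (the $r$ copies of $\transp^{a_i}{}_{c_i}$, the $s$ copies of $\transp_{b_j}{}^{d_j}$, and the kernel $\sk_x$) by the Leibniz rule. The two subtracted correction terms remove, respectively, the $\dLie_{(X,0)}$ part of each transport factor (the piece differentiating in the $x$-slot) and the $\Lie_X^{C^\infty}$ part of the kernel. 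What survives, exactly as in the displayed vector computation, is the $\dLie_{(0,X)}$ part of each transport factor together with the $\Lie_X^{\Omega^n}$ part of the kernel. The key algebraic point is then that these surviving terms reassemble into an integral of $\Lie_X S$ transported by $\transp$: the $y$-derivatives on the transport factors combine with the density Lie derivative $\Lie_X^{\Omega^n}\sk$ to produce, after integration by parts in $y$, precisely the components of $\Lie_X S$ smoothed against $\transp$ and $\sk$. This is the tensorial generalisation of the scalar identity from \cite{paper1}, and it is the computational heart of the argument.

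Second, to pass from continuous $S$ to a general distribution $T\in\cD'^r_s(M)$, I would exploit that both $\gLie_X\circ\iota^r_s$ and $\iota^r_s\circ\Lie_X$ are constructed from the dual pairing \eqref{49}--\eqref{beta} against the test density $\Psi_x$. Using the compact duality formula $\langle\Lie_Z S,\Psi\rangle=-\langle S,\Lie_Z\Psi\rangle$ together with the explicit action of $\gLie_X$ on the smoothing data, the computation becomes formally identical to the continuous case but with the $y$-integration replaced by the distributional pairing. In effect every manipulation that was an integration by parts in $y$ in the continuous case is now the defining property of the distributional Lie derivative, so the rearrangement carries over verbatim once the correction terms in \eqref{genlietensor} are matched against the $\dLie_{(X,0)}\transp$ and $\Lie_X^{C^\infty}\sk$ contributions.

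The main obstacle I anticipate is purely bookkeeping rather than conceptual: correctly tracking which portion of $\Lie_X\bigl(\iota^r_s(S)(\transp,\sk)\bigr)$ is cancelled by each correction term across all $r+s$ transport factors simultaneously, and confirming that the signs from $\dLie_{(X,0)}$ versus $\dLie_{(0,X)}$ (with their differing sign conventions in the explicit formulae above) combine correctly with the $\Omega^n$ and $C^\infty$ splitting of $\Lie_X^{SK}\sk$. The single-factor vector computation already exhibits the mechanism, so the real content is verifying that the Leibniz expansion over the full tensor product collapses term-by-term in the same way; the $r=1,s=0$ case displayed in the text is the template, and the general case follows by the same cancellation applied in each slot.
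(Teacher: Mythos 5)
Your proposal is correct and follows essentially the same route as the paper: part (b) is the observation that $\sigma^r_s(S)$ has no $(\transp,\sk)$ dependence, and part (a) rests on the explicit Leibniz-plus-cancellation computation (the correction terms killing the $\dLie_{(X,0)}$ and $\Lie_X^{C^\infty}$ contributions, leaving the $\dLie_{(0,X)}$ and $\Lie_X^{\Omega^n}$ pieces that reassemble into $\iota^r_s(\Lie_X S)$ by integration by parts, or by the duality $\langle\Lie_Z S,\Psi\rangle=-\langle S,\Lie_Z\Psi\rangle$ in the distributional case), which is exactly the paper's $\iota^1_0$ template extended slot by slot. The only blemish is the miscount ``$2r+2s+1$ factors'' where $r+s+1$ is meant; this does not affect the argument.
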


\begin{remark}The deeper reason for Proposition \ref{prop4} comes from looking at the induced action of a diffeomorphism $\mu \colon M \to N$ on the space of generalised tensor fields. If $T \in \CEM(N)$ is a generalised tensor field on $N$ then we may pull it back to a generalised tensor field $\mu^*T$ on $M$ by defining 
\[
(\mu^* T)(\transp, \sk)(x) \coleq (D_{\mu(x)}\mu^{-1})^r_s(T((\mu_*, \mu_*)\transp, (\mu_*, \mu_*)\sk)(\mu(x)).
\]
It is readily verified that the action of the diffeomorphism commutes with the embedding so that if $S \in \cD'^r_s(M)$ is a distributional tensor field then
\[
\mu^*(\iota^r_s(S))=\iota^r_s(\mu^*S)
\]
If we now take $\mu$ to be the flow ${\mathrm{Fl}}^X_t$ of a (complete) vector field then we have 
\[
(\mathrm{Fl}^X_t)^*(\iota^r_s(S))=\iota^r_s ((\mathrm{Fl}^X_t)^*S).
\]
Differentiating this with respect to $t$ and using the fact that for any $T \in \CEM^r_s(M)$ we have
\begin{equation} \label{genlie2}
\gLie_XT=\left.\frac{d}{dt}\right|_{t=0}
{(\mathrm{Fl}}^X_t)^*T
\end{equation}
this immediately gives
\[
\gLie_X(\iota^r_s(S))=\iota^r_s(\Lie_X S).
\]
Thus, the fact that the generalised Lie derivative commutes with the embedding follows from the fact that the action of a diffeomorphism commutes with the embedding.
\end{remark}

\section{The quotient construction and the algebra of generalised tensor
  fields}

Having looked at the properties of the basic space $\CEM^r_s(M)$ we turn to the definition of generalised tensor fields $\CG^r_s(M)$. These are defined as moderate tensor fields modulo negligible tensor fields.

Similarly to the nets of smoothing kernels $(\sk_\e)_\e$ of the scalar case, one needs to introduce a suitable asymptotic structure on nets of transport operators $(\transp_\e)_\e$ such that in the limit $\e \to 0$, $\transp_\e \otimes \sk_\e$ converges to the identity in the right way. The respective definitions are as follows:

\begin{definition}[Admissible nets of transport operators]
 A net $(\transp_\e)_\e \in \TO(M)^I$ is called \emph{admissible} if
 \begin{enumerate}[label=(\roman*)]
  \item locally around the diagonal in $M \times M$, $(\transp_\e)_\e$ is uniformly bounded for small $\e$, and
  \item $\transp^a{}_{b,\e}(x,x)=\delta^a_b$ for all $x \in M$.
 \end{enumerate}
 The space of all admissible nets of transport operators is denoted $\Upsilon(M)$.
\end{definition}

Condition (i) means that for each chart $U$ on $M$ and all multiindices $\k,\l$ the derivative $\partial^\k_x \partial_y^\l \Upsilon^a{}_{b,\e}$ is uniformly bounded in a neighborhood of each point $(x,x)$ of the diagonal; condition (ii) simply means that $\Upsilon^*(x,x)$ and $\Upsilon_*(x,x)$ are the identity mappings.

The linear space corresponding to this affine space is introduced as
\[ \Upsilon_0(M) \coleq \{ (\transpp_\e)_\e \in \TO(M)^I\ |\ (\transp_\e)_\e \in \Upsilon(M) \Rightarrow (\transp_\e)_\e + (\transpp_\e)_\e \in \Upsilon(M) \}. \]

The following definitions are the obvious generalisations of the scalar case (we refer to \cite{distcurv,bigone} for detailed proofs in a slightly extended setting):

\begin{definition}[Moderate Tensors] \label{moderate} The tensor field $T \in
\CEM^r_s(M)$ is called moderate if
$\forall K \subset M$ compact $\forall j,k,l \in \Nat_0$ $\forall \transp \in \Upsilon(M)$, $\transp_1, \dotsc, \transp_j \in \Upsilon_0(M)$ $\forall \sk \in \MC(M)$, $\sk_1, \dotsc, \sk_k \in \MC_0(M)$ $\forall X_1, \dotsc, X_l \in \vect(M)$ $\exists N \in \Nat$:
\begin{equation} \label{mod}
\begin{aligned}
&\sup_{x\in K} \|\Lie_{X_1}\dots \Lie_{X_l} (\mathrm{d}_1^j \mathrm{d}_2^k
T(\transp_\e, \sk_\e)\\
&\qquad\qquad(\transp_{1,\e}, \dotsc, \transp_{j,\e}, \sk_{1,\e}, \dotsc, \sk_{k,\e}))(x)\|
= O(\eps^{-N}) \qquad(\eps \to 0)
\end{aligned}
\end{equation}
where $||\cdot||$ denotes the norm induced by some background metric.

The set of moderate tensors in $\CEM^r_s(M)$ is denoted $\CM{}^r_s(M)$.
\end{definition}

Note that the space of moderate tensors does not depend upon the choice of background metric used to define the above norm. The inclusion of the differentials with respect to $\transp$ and $\sk$ makes the definition look quite complicated but for embedded fields the dependence is at worst linear so that in practice there are no significant complications caused by this.

\begin{definition}[Negligible tensors]
The tensor field $\tilde T \in \CM{}^r_s(M)$ is negligible if
$\forall K \subset M$ compact $\forall j,k,l,m \in \Nat_0$ $\forall \transp \in \Upsilon(M)$, $\transp_1, \dotsc, \transp_j \in \Upsilon_0(M)$ $\forall \sk \in \MC(M)$, $\sk_1, \dotsc, \sk_k \in \MC_0(M)$ $\forall X_1, \dotsc, X_l \in \vect(M)$:
\begin{equation} \label{neg}
\begin{aligned}
& \sup_{x\in K} \|\Lie_{X_1}\dots \Lie_{X_l} (\mathrm{d}_1^j \mathrm{d}_2^k
T(\transp_\e, \sk_\e)\\
& \qquad\qquad (\transp_{1,\e}, \dotsc, \transp_{j,\e}, \sk_{1,\e}, \dotsc, \sk_{k,\e}))(x)\|
= O(\eps^{-m}) \qquad(\eps \to 0).
\end{aligned}
\end{equation}
\end{definition}

We should note that the above formulae also apply to type $(0,0)$ tensor
fields (i.e., scalar fields) which depend on  $\transp$. Such fields arise for
example from contraction of higher valence tensors. 
After taking this point
into account it follows from the above definitions that one can test for 
moderateness and negligibility by looking at the scalar field obtained 
by contraction.

\begin{proposition}[Saturation]\label{saturation}
A generalised tensor field $\tilde T \in \CEM^r_s(M)$ is moderate (respectively negligible) iff for all smooth covector fields $\theta^i \in \form$ and smooth vector fields $X_j \in \vect(M)$, the generalised scalar field
\[ F(\transp,\sk)(x)= \tilde T(\transp, \sk)(x)(\theta^1(x) \dots \theta^r(x),X_1(x) \dots X_s(x)) \]
obtained by contraction is moderate (respectively negligible) when regarded as an element of $\CEM^0_0(M)$. 
\end{proposition}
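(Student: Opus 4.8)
The plan is to prove both directions by reducing the tensorial moderateness/negligibility estimate to the scalar one via contraction. The key observation is that the defining seminorm estimates \eqref{mod} and \eqref{neg} are stated using the norm induced by a background metric, and — as noted in Definition \ref{moderate} — this choice is immaterial. I would exploit this freedom: on a fixed compact set $K$ the tensor norm of $\tilde T(\transp_\e,\sk_\e)(x)$ is comparable (uniformly in $x \in K$) to the supremum of its contractions $\tilde T(\transp_\e,\sk_\e)(x)(\theta^1(x),\dots,\theta^r(x),X_1(x),\dots,X_s(x))$ over a fixed finite family of smooth frames and coframes that trivialise $T^r_sM$ over a neighbourhood of $K$. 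This is the standard fact that the operator norm of a multilinear map is controlled by its values on a basis, localised by a finite subcover of $K$.

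First I would establish the forward implication, which is immediate: if $\tilde T$ is moderate (resp.\ negligible), then for any fixed smooth $\theta^i \in \form$ and $X_j \in \vect(M)$ the scalar field $F(\transp,\sk)$ is obtained by contracting $\tilde T$ against smooth arguments. Since contraction against smooth fields commutes with the differentials $\mathrm{d}_1^j \mathrm{d}_2^k$ and with the Lie derivatives $\Lie_{X_1}\cdots\Lie_{X_l}$ up to Leibniz terms involving derivatives of the fixed smooth $\theta^i, X_j$, each derivative of $F$ is a finite sum of contractions of derivatives of $\tilde T$ against smooth tensor fields. Each such term inherits the $O(\e^{-N})$ (resp.\ $O(\e^{-m})$) bound, and smooth contraction factors are bounded on $K$, so $F$ satisfies the scalar estimates.

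For the converse I would argue by choosing, over a finite atlas covering $K$, the coordinate coframes $\theta^i = \mathrm{d}x^{a_i}$ and coordinate frames $X_j = \partial_{b_j}$ (extended to global smooth fields by a partition of unity subordinate to the cover). The hypothesis gives that each component scalar $F(\transp,\sk) = \tilde T{}^{a_1\dots a_r}{}_{b_1\dots b_s}(\transp,\sk)$ is moderate (resp.\ negligible). Since the tensor norm on $K$ is bounded by a finite sum of absolute values of these components, and since the Lie derivatives and differentials of the components agree with the contracted derivatives of $\tilde T$ up to lower-order terms that are themselves controlled contractions, the bound on $\|\Lie_{X_1}\cdots\Lie_{X_l}(\mathrm{d}_1^j\mathrm{d}_2^k \tilde T(\dots))\|$ follows by taking the maximum over the finitely many component estimates.

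The main obstacle, and the point deserving care, is the interchange of the component-extraction (contraction against fixed smooth frames) with the differential operators $\mathrm{d}_1^j$ and $\mathrm{d}_2^k$ acting on the $\transp$- and $\sk$-dependence. One must verify that differentiating in the convenient-calculus sense of \cite{KM} with respect to $\transp$ and $\sk$ genuinely commutes with evaluating on the fixed smooth arguments $\theta^i(x), X_j(x)$ — this holds because the arguments do not depend on $\transp$ or $\sk$, so the contraction is a bounded linear (hence smooth) map applied after $T$, and the chain rule of \cite{KM} applies directly. Once this commutation is justified, both implications reduce to the elementary equivalence of tensor norms with finite families of component functions, and the proposition follows.
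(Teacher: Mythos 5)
Your argument is correct and follows the same route the paper intends: the paper gives no explicit proof of Proposition~\ref{saturation}, merely remarking that the defining estimates also apply to $\Upsilon$-dependent scalar fields arising from contraction, and your reduction via local frames/coframes, the Leibniz expansion of the Lie derivatives, and the exact commutation of $\mathrm{d}_1,\mathrm{d}_2$ with contraction against $(\Upsilon,\omega)$-independent arguments is precisely the standard argument being invoked. The only point worth making explicit is that in the converse direction the exponent $N$ is obtained as a maximum over the finitely many frame fields and Leibniz terms needed on the compact set $K$, which you implicitly do.
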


\begin{proposition}\label{submodule}
$\CM{}^r_s(M)$ is a $\CE^0_0(M)$-module with $\CN^r_s(M)$ as a submodule. 
\end{proposition}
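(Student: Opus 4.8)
The plan is to establish the module axioms at the level of the basic space and then to check that the asymptotic conditions defining $\CM{}^r_s(M)$ and $\CN^r_s(M)$ are stable under the relevant operations. The acting ring is the algebra $\CM{}^0_0(M)$ of moderate scalar fields with the pointwise product, and its action is the pointwise scalar multiplication $(fT)(\transp,\sk)(x)=f(\transp,\sk)(x)\,T(\transp,\sk)(x)$; addition of tensors is likewise pointwise. For each fixed $(\transp,\sk)$ the value $T(\transp,\sk)$ is an ordinary smooth tensor field, and $\CT^r_s(M)$ is a module over $C^\infty(M)$, so every module axiom (bilinearity, $f(gT)=(fg)T$, and the unit law) holds pointwise in $(\transp,\sk,x)$. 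Smoothness in $(\transp,\sk)$ in the sense of \cite{KM} is preserved because pointwise multiplication $C^\infty(M)\times\CT^r_s(M)\to\CT^r_s(M)$ is bounded bilinear, hence smooth, and a composite of smooth maps is smooth; thus $fT\in\CEM^r_s(M)$ and $\CEM^r_s(M)$ is a module over the algebra $\CEM^0_0(M)$. It remains only to show that the moderate and negligible subspaces are closed under these operations.

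Additive closure is immediate: if two tensors satisfy \eqref{mod} with exponents $N_1,N_2$ then their sum satisfies it with $\max(N_1,N_2)$, and a sum of tensors each satisfying \eqref{neg} at every order again does so; hence $\CM{}^r_s(M)$ and $\CN^r_s(M)$ are additive subgroups. The content therefore lies in multiplication by a scalar, for which the engine is a single Leibniz expansion. Fixing a compact $K$, orders $j,k,l$ and test objects $\transp\in\Upsilon(M)$, $\transp_1,\dots,\transp_j\in\Upsilon_0(M)$, $\sk\in\MC(M)$, $\sk_1,\dots,\sk_k\in\MC_0(M)$, $X_1,\dots,X_l\in\vect(M)$, I would apply to $f\cdot T$ first the convenient-calculus product rule for $\mathrm{d}_1^j\mathrm{d}_2^k$, which distributes the $\transp$- and $\sk$-derivatives together with the linear arguments $\transp_i,\sk_i$ between the factors $f$ and $T$, and then, since for each fixed $\e$ the result is a genuine smooth tensor field, the classical Leibniz rule $\Lie_X(fT)=(Xf)T+f\,\Lie_XT$ to distribute the $\Lie_{X_i}$. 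This yields a finite sum whose summands each factor as (a Lie- and $\mathrm{d}$-derivative of $f$ tested on a subfamily of the $\transp_i,\sk_i$) times (the complementary derivative of $T$), and because the background norm is homogeneous, $\|fT\|=|f|\,\|T\|$, the $K$-supremum of each summand is at most the product of the associated scalar and tensor sup-bounds.

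For moderateness, each scalar factor is $O(\e^{-N_1})$ and each tensor factor is $O(\e^{-N_2})$: both are instances of the moderateness tests for $f$ and $T$ at the relevant (smaller) orders and restricted test objects, which are covered because \eqref{mod} is quantified over all orders and all admissible data. The product is then $O(\e^{-(N_1+N_2)})$, and the finite sum is $O(\e^{-N})$; hence $fT\in\CM{}^r_s(M)$. Taking $r=s=0$ shows in particular that $\CM{}^0_0(M)$ is closed under products, so it is indeed an algebra and the action is well posed. For the submodule property, let $f\in\CM{}^0_0(M)$ and $S\in\CN^r_s(M)$: in the same expansion the $S$-factor is now $O(\e^{m})$ for every $m\in\Nat_0$ (negligibility holds at every differential order), while the $f$-factor remains $O(\e^{-N_1})$, so each summand is $O(\e^{m-N_1})$ for all $m$, i.e.\ $O(\e^{m'})$ for all $m'$; the finite sum is therefore negligible and $fS\in\CN^r_s(M)$. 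Together with additive closure this makes $\CN^r_s(M)$ a $\CM{}^0_0(M)$-submodule of $\CM{}^r_s(M)$.

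The only genuinely technical step, and the one I expect to be the main obstacle, is the bookkeeping underlying the Leibniz expansion: one must justify the product rule for the higher differentials $\mathrm{d}_1^j\mathrm{d}_2^k$ of a pointwise product of $C^\infty$-maps on the infinite-dimensional domain $\TO(M)\times\SK(M)$ and check that each partial factor produced by splitting the linearised arguments $\transp_i\in\Upsilon_0(M)$, $\sk_i\in\MC_0(M)$ is again of the precise form controlled by \eqref{mod} (resp.\ \eqref{neg}). This is technical rather than deep. A conceptually cleaner route that avoids the tensorial bookkeeping is to invoke the Saturation Proposition \ref{saturation}: contracting $fT$ with arbitrary smooth $\theta^i\in\form$ and $X_j\in\vect(M)$ gives $f\cdot F_T$, the pointwise product of the moderate (resp.\ negligible) scalar contraction $F_T$ of $T$ with the moderate scalar $f$, so the tensor statements reduce to the scalar product estimates, which are exactly the $(r,s)=(0,0)$ case treated above.
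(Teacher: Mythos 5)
Your proposal is correct and takes essentially the same route as the paper: the paper's proof simply lists the closure properties (moderate scalar times moderate tensor is moderate, and the mixed moderate/negligible products are negligible) and asserts that they follow from the definitions as in the scalar case, which is precisely the Leibniz-expansion and product-of-estimates argument you spell out (and which Proposition~\ref{saturation} lets you reduce to the scalar case, as you note). The only item the paper records that you leave implicit is the case of a \emph{negligible} scalar acting on a \emph{moderate} tensor --- not needed for the literal submodule statement but used later to descend to the quotient --- and your symmetric estimate yields it verbatim.
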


\begin{proof}
As with the case of scalar fields the definitions of moderate and negligible may be used to establish the following results which prove
the proposition:
\begin{enumerate}[label=(\alph*)]
\item $\tilde f \in \CM{}^0_0(M)$, $S \in \CM{}^r_s(M)$ $\Rightarrow$ $\tilde f S \in \CM{}^r_s(M)$;
\item $\tilde f \in \CN^0_0(M)$, $S \in \CM{}^r_s(M)$ $\Rightarrow$ $\tilde f S \in \CN^r_s(M)$;
\item $\tilde f \in \CM{}^0_0(M)$, $S \in \CN^r_s(M)$ $\Rightarrow$ $\tilde f S \in \CN^r_s(M)$.\qedhere
\end{enumerate}
\end{proof}

We next examine the properties of the embeddings of distributional and smooth tensor fields into the basic space. As an immediate consequence of the analytical properties of the combination of admissible nets of transport operators with test objects \cite{distcurv} one may establish the following proposition.

\begin{proposition}
\leavevmode
\begin{enumerate}[label=(\alph*)]
\item $\iota^r_s({\cD'}^r_s(M)) \subseteq \CM{}^r_s(M)$.
\item $\sigma^r_s(\mathcal{T}^r_s(M)) \subseteq \CM{}^r_s(M)$.
\item $(\iota^r_s-\sigma^r_s)(\mathcal{T}^r_s(M)) \subseteq \CN^r_s(M)$.
\item If $T\in {\cD'}^r_s(M)$ and $\iota^r_s(T)\in \CN^r_s(M)$, then $T=0$.
\end{enumerate}
\end{proposition}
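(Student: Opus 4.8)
The plan is to prove the four items separately, using Proposition \ref{saturation} throughout to reduce the moderateness and negligibility assertions in (a)--(c) to the corresponding scalar statements, which are then settled by the estimates of the scalar theory \cite{paper1} together with the uniform bounds on admissible transport nets from condition (i). Item (b) is immediate: since $\sigma^r_s(S)(\transp,\sk)=S$ carries no dependence on $\transp$ or $\sk$, every differential $\mathrm{d}_1^j\mathrm{d}_2^k$ with $j+k>0$ vanishes identically, and the only surviving quantity is $\sup_{x\in K}\|\Lie_{X_1}\cdots\Lie_{X_l}S\|$, which is finite on the compact set $K$ and hence trivially $O(\e^0)$.

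For (a) I would contract $\iota^r_s(T)$ against arbitrary smooth fields $\theta^i$, $X_j$ as in Proposition \ref{saturation}, producing the scalar field $F(\transp,\sk)(x)=\langle T,\Psi_x\rangle$ with $\Psi_x$ given by \eqref{beta}. Continuity of $T\in\cD'^r_s(M)$ yields, over a compact neighbourhood in the $y$-variable, a seminorm bound of the form $|\langle T,\Psi_x\rangle|\le C\sup_{|\mu|\le N}\|\partial^\mu\Psi_x\|_\infty$; here the transport factors $\transp^a{}_b(x,\cdot)$ and their derivatives are uniformly bounded near the diagonal by admissibility (i), while the kernel factor $\sk_x$ supplies precisely the polynomial blow-up in $\e$ already controlled in \cite{paper1}. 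Since $\Psi_x$ is polynomial of degree $r+s$ in $\transp$ and linear in $\sk$, the differentials $\mathrm{d}_1^j\mathrm{d}_2^k$ vanish for $j>r+s$ or $k>1$, so only finitely many estimates are needed; combining these with the extra derivatives produced by the $\Lie_{X_i}$ gives the required $O(\e^{-N})$.

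Part (c) is the genuine consistency statement and is where I expect the main work. After contraction, the difference $(\iota^r_s-\sigma^r_s)(S)$ becomes, for smooth $S$, the scalar $\int_M(\text{transported }S)(y)\,\sk_x(y)-S(x)$ (cf.\ \eqref{46}) evaluated on the chosen fields. I would Taylor-expand the smooth integrand about $y=x$, using the diagonal normalisation $\transp^a{}_b(x,x)=\delta^a_b$ to match the zeroth-order term with $S(x)$, and then invoke the vanishing-moment conditions on the smoothing kernels of \cite{paper1} to force every higher-order term to decay faster than any power of $\e$. The main obstacle is controlling the interplay between the transport operator --- which is the identity only on the diagonal and not in a full neighbourhood --- and these moment conditions, so that the spatial derivatives $\Lie_{X_i}$ and the differentials $\mathrm{d}_1^j\mathrm{d}_2^k$ do not destroy the fast decay; this is exactly the analytical input supplied by \cite{distcurv}.

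Finally, for (d) I would recover $T$ from its smoothing by duality. For a test density $\Phi\in\tilde\cD^s_r(M)$ the pairing $\langle\iota^r_s(T)(\transp_\e,\sk_\e),\Phi\rangle$ equals $\langle T,\Phi_\e\rangle$ for a regularised density $\Phi_\e$ obtained by moving $T$ outside the $x$-integral, and $\Phi_\e\to\Phi$ in $\tilde\cD^s_r(M)$ because admissible nets act as an approximate identity (the kernel concentrates on the diagonal, where the transport operator reduces to the identity); hence the pairing tends to $\langle T,\Phi\rangle$ as $\e\to0$. On the other hand, negligibility of $\iota^r_s(T)$ makes the integrand $O(\e^m)$ uniformly on the compact support of $\Phi$ for every $m$, so the same limit is $0$. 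Therefore $\langle T,\Phi\rangle=0$ for all $\Phi$, and consequently $T=0$.
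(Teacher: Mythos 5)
Your proposal is correct, and it fills in precisely the argument that the paper itself omits: the paper states this proposition without proof, deferring to the ``analytical properties of the combination of admissible nets of transport operators with test objects'' in \cite{distcurv}, and your route --- reduction to scalars via Proposition \ref{saturation}, seminorm estimates plus uniform boundedness of the transport nets for (a), triviality of the differentials for (b), Taylor expansion at the diagonal combined with the vanishing-moment conditions for (c), and the delta-net/duality argument of Proposition \ref{convergeinD} for (d) --- is exactly the standard one carried out in that reference.
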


We may also show that as in the scalar case moderateness and negligibility are stable under the action of the generalised Lie derivative. 

\begin{proposition}
Let $X \in \vect(M)$. Then,
\begin{enumerate}[label=(\alph*)]
\item $\gLie_X(\CM{}^r_s(M)) \subseteq \CM{}^r_s(M)$, and
\item $\gLie_X(\CN^r_s(M)) \subseteq \CN^r_s(M)$.
\end{enumerate}
\end{proposition}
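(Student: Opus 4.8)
The plan is to verify directly that $\gLie_X T$ satisfies the estimate of Definition \ref{moderate} (respectively the negligibility estimate) whenever $T$ does. Fix a compact set $K$, indices $j,k,l \in \Nat_0$, test objects $\transp \in \Upsilon(M)$, $\transp_1,\dots,\transp_j \in \Upsilon_0(M)$, $\sk \in \MC(M)$, $\sk_1,\dots,\sk_k \in \MC_0(M)$ and vector fields $X_1,\dots,X_l \in \vect(M)$. Expanding $\gLie_X T$ by \eqref{genlietensor} produces three summands, and I would apply the testing operator $\Lie_{X_1}\cdots\Lie_{X_l}\,\mathrm{d}_1^j\mathrm{d}_2^k$ to each and evaluate at the net $(\transp_\e,\sk_\e)$. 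The goal is to show that each summand becomes a \emph{finite} sum of expressions of exactly the form occurring in the moderateness/negligibility test for $T$ itself---but possibly with one extra Lie derivative (the list $X_1,\dots,X_l$ enlarged by $X$), one extra $\mathrm{d}_1$ or $\mathrm{d}_2$, and with some differentiation slots filled by $\Lie_X^{TO}$ of a transport net or $\Lie_X^{SK}$ of a kernel net. Granting that the inserted objects are again legitimate test objects, moderateness of $T$ bounds each term by $O(\e^{-N_i})$ and the maximum of the finitely many $N_i$ furnishes the required $N$; in the negligible case every term is $O(\e^{-m})$ for all $m$, and so is their sum. This handles both (a) and (b) at once.

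The key lemma on which everything rests is that the transport-operator Lie derivative respects the test-object classes: for $(\transp_\e)_\e \in \Upsilon(M)$ the net $(\Lie_X^{TO}\transp_\e)_\e$ lies in $\Upsilon_0(M)$, and likewise $\Lie_X^{TO}$ maps $\Upsilon_0(M)$ into itself. I would prove this from the explicit formulae for $\Lie_{(X,0)}\transp$ and $\Lie_{(0,X)}\transp$. On the diagonal their sum simplifies: using the diagonal value of $\transp$ (namely $\delta^a_b$ for an admissible net and $0$ for a tangent net) the two terms carrying a derivative of $X$ reduce to $-\partial_b X^a$ and $+\partial_b X^a$ in the admissible case---where they cancel---and vanish outright in the tangent case, while the remaining contribution $X^c(\partial_{x^c}+\partial_{y^c})\transp^a{}_b|_{x=y}$ equals $X^c\frac{\rd}{\rd x^c}[\transp^a{}_b(x,x)]$, which is zero since $x \mapsto \transp^a{}_b(x,x)$ is constant ($\delta^a_b$ or $0$). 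Hence $(\Lie_X^{TO}\transp)^a{}_b(x,x)=0$, which is the defining (zero on the diagonal) condition of $\Upsilon_0(M)$. Uniform boundedness of $\Lie_X^{TO}\transp_\e$ together with all its $x,y$-derivatives near the diagonal follows because $\Lie_X^{TO}$ is a first-order operator in $\transp$ with coefficients assembled from the fixed smooth field $X$ and from derivatives of $\transp_\e$, all of which are uniformly bounded near the diagonal by the definition of $\Upsilon(M)$ (respectively $\Upsilon_0(M)$). The corresponding statements $\Lie_X^{SK}(\MC(M)) \subseteq \MC_0(M)$ and $\Lie_X^{SK}(\MC_0(M)) \subseteq \MC_0(M)$ are precisely those established for the scalar theory in \cite{paper1} and may be quoted.

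It then remains to carry out the (routine) bookkeeping. For the first summand, $\Lie_X(T(\transp,\sk))$, I would use that on a fixed manifold $\Lie_X$ is a bounded linear operator on $\cT^r_s(M)$, so it commutes with the differentials $\mathrm{d}_1,\mathrm{d}_2$ in the calculus of \cite{KM}; the testing operator then yields the moderateness expression for $T$ with the enlarged Lie-derivative list $X_1,\dots,X_l,X$, which is controlled by hypothesis. For the second and third summands I would use that $\transp \mapsto \Lie_X^{TO}\transp$ and $\sk \mapsto \Lie_X^{SK}\sk$ are continuous and \emph{linear}, so by the Leibniz rule each further application of $\mathrm{d}_1$ (respectively $\mathrm{d}_2$) either raises by one the order of differentiation of $T$, inserting one of the given tangent nets $\transp_i$ (respectively $\sk_i$), or else differentiates the inserted direction, replacing $\Lie_X^{TO}\transp$ by $\Lie_X^{TO}\transp_i$ (respectively $\Lie_X^{SK}\sk$ by $\Lie_X^{SK}\sk_i$), with no higher derivatives of the insertion. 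By the key lemma every inserted direction again lies in $\Upsilon_0(M)$ (respectively $\MC_0(M)$), so each resulting term is a genuine $T$-testing expression, controlled by moderateness (respectively negligibility) of $T$. Summing the finitely many terms and taking the worst bound completes the argument. The one genuinely non-formal point is the key lemma of the second paragraph---in particular the cancellation on the diagonal that keeps $\Lie_X^{TO}$ inside the tangent class---everything else being an organised application of the chain and Leibniz rules together with the stability estimates for $T$.
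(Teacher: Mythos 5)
Your argument is correct, and it is essentially the standard verification: the paper itself gives no proof of this proposition (it merely asserts it ``as in the scalar case'' and defers to the references \cite{distcurv,bigone}), and what you have written is the argument those sources carry out. In particular you have correctly identified and established the one non-formal ingredient, namely that $\Lie_X^{TO}$ maps $\Upsilon(M)$ into $\Upsilon_0(M)$ and preserves $\Upsilon_0(M)$: the cancellation of the $\pm\partial_b X^a$ terms on the diagonal together with $X^c\tfrac{\rd}{\rd x^c}\bigl[\transp^a{}_b(x,x)\bigr]=0$ is exactly what makes the bookkeeping close up. The remaining Leibniz/chain-rule accounting and the appeal to the scalar-case result for $\Lie_X^{SK}$ are as they should be.
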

We are now in a position to define the space of generalised tensor fields:

\begin{definition}[Generalised tensor fields] \label{generalisedtensors}
We define the $\CG(M)$-module $\CG^r_s(M)$ of generalised type $(r,s)$ tensor fields by
\begin{equation}
\CG^r_s(M)=\CM{}^r_s(M)/\CN^r_s(M).
\end{equation}
\end{definition}

We now consider tensor operations on $\CG^r_s(M)$.  Let $\tilde S \in \CEM^r_s(M)$ and $\tilde T \in \CEM^t_u(M)$. Since $\tilde S(\transp, \sk)$ and $\tilde T(\transp, \sk)$ are smooth tensor fields we may define $\tilde S \otimes \tilde T \in \CEM^{r+t}_{s+u}(M)$ by
\begin{equation}\label{gamma}
(\tilde S \otimes \tilde T)(\transp, \sk) = (\tilde S(\transp, \sk))\otimes(\tilde T(\transp, \sk)).
\end{equation}
In a similar way as in the scalar case \cite{paper1} one may use the definitions of moderateness and negligibility to show that if both $\tilde T$ and $\tilde S$ are moderate then $\tilde T \otimes \tilde S$ is moderate and that if either $\tilde T$ or $\tilde S$ is negligible then so is $\tilde T \otimes \tilde S$. We may therefore define the tensor product as follows:

\begin{definition}[Tensor product]\label{tensorproduct}
The tensor product of $[\tilde S] \in \CG^r_s(M)$ and $[\tilde T] \in \CG^t_u(M)$ is defined by
\begin{equation}
[\tilde T] \otimes [\tilde S]=[\tilde T \otimes \tilde S]
\label{52}
\end{equation}
where $\tilde T \otimes \tilde S $ is given by equation \eqref{gamma} above.
\end{definition}

In the same way one can show that if $\tilde T$ is obtained from $\tilde S$ by contraction on a pair of indices, then $\tilde T$ is moderate if $\tilde S$ is moderate and that $\tilde T$ is negligible if $\tilde S$ is negligible, hence we may define contraction of generalised tensor field as follows

\begin{definition}[Contraction] \label{contraction}
Let $[\tilde S] \in \CG^r_s(M)$. We may define an element of $\CG^{r-1}_{s-1}(M)$ by making a contraction according to the formula
\[ [\tilde S]^{a\dots e \dots b}_{c \dots e \dots d}= [\tilde S^{a\dots e \dots b}_{c \dots e \dots d}]. \]
\end{definition}

We now define the generalised tensor algebra as
\[
\hCG(M) = \bigoplus_{r,s\in\Nat} \CG{}^r_s(M). 
\]
From Definition \ref{tensorproduct} and Definition \ref{contraction} we see that $\hCG(M)$ is closed under the operations of tensor product and contraction. 

We summarise the properties we have established in the following theorem:

\begin{theorem}
\leavevmode
\begin{enumerate}[label=(\alph*)]
\item The generalised tensor algebra $\hCG(M)$ is an associative differential algebra with product the tensor product $\otimes$, and derivatives given by the generalised Lie derivatives $\hat\Lie_X$ for $X \in \vect(M)$.
\item The algebra is closed under the action of contraction.
\item The space of smooth tensor fields may be embedded by the ``constant map''  $\sigma^r_s$  and the algebra of smooth tensor fields forms a subalgebra of $\hCG(M)$.
\item For each $r,s \in \Nat_0$ there is a linear  map $\iota^r_s$ which embeds ${\cD'}^r_s(M)$ as a $C^\infty(M)$-module of $\CG^r_s(M)$, and the embedding $\iota^r_s$ coincides with $\sigma^r_s$ when restricted to smooth tensor fields.
\item The embeddings $\iota^r_s$ and $\sigma^r_s$ commute with the Lie derivative so that $\hat\Lie_X(\iota^r_s(T))=\iota^r_s(\Lie_X T)$ and $\hat\Lie_X(\sigma^r_s(T))=\sigma^r_s(\Lie_X T)$.
\end{enumerate}
\end{theorem}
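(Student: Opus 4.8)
The theorem collects the structural facts proved piecemeal in Sections~2--5, so my plan is to read off each clause from an earlier result and, where necessary, to lift that result from the basic space to the quotient $\CG^r_s(M)=\CM{}^r_s(M)/\CN^r_s(M)$ of Definition~\ref{generalisedtensors}. The descent is uniform in each case: Proposition~\ref{submodule} together with the remarks preceding Definitions~\ref{tensorproduct} and~\ref{contraction} record that $\otimes$ and contraction preserve moderateness, and that a tensor product with a negligible factor, or a contraction of a negligible field, is again negligible; while the proposition asserting $\gLie_X(\CM{}^r_s(M))\subseteq\CM{}^r_s(M)$ and $\gLie_X(\CN^r_s(M))\subseteq\CN^r_s(M)$ does the same for the generalised Lie derivative. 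Hence $\otimes$, contraction and each $\gLie_X$ are well defined on classes, which already yields clause~(b) and the closure half of~(a).

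For the algebraic content of~(a) I would first note that associativity of $\otimes$ descends pointwise: by~\eqref{gamma} the product of two basic fields is the ordinary tensor product of the smooth fields $\tilde S(\transp,\sk)$ and $\tilde T(\transp,\sk)$, which is associative, so the identity survives passage to classes, and $\mathbb{R}$-bilinearity is immediate. The genuinely new point is that each $\gLie_X$ is a \emph{derivation} for $\otimes$, i.e.\ $\gLie_X(\tilde S\otimes\tilde T)=(\gLie_X\tilde S)\otimes\tilde T+\tilde S\otimes(\gLie_X\tilde T)$. Expanding the defining formula~\eqref{genlietensor} on $\tilde S\otimes\tilde T$ via~\eqref{gamma}, the leading term $\Lie_X((\tilde S\otimes\tilde T)(\transp,\sk))$ splits by the classical Leibniz rule for the ordinary Lie derivative, and the two correction terms split by applying the product rule of the calculus of~\cite{KM} to the differentials $\mathrm{d}_1$ and $\mathrm{d}_2$ of the bilinear map $(\tilde S,\tilde T)\mapsto\tilde S\otimes\tilde T$, evaluated against $\Lie_X^{TO}\transp$ and $\Lie_X^{SK}\sk$ respectively. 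Regrouping the three resulting pairs of terms gives the Leibniz identity. I expect this to be the main obstacle, since it requires checking that the convenient-calculus differentials $\mathrm{d}_1,\mathrm{d}_2$ interact with the pointwise tensor product exactly as an ordinary product rule would.

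Clauses~(c) and~(d) are then assembled from the embedding proposition. That $\sigma^r_s$ and $\iota^r_s$ land in $\CM{}^r_s(M)$ is its parts~(a),(b); linearity of $\iota^r_s$ is immediate from the pairing~\eqref{49}; injectivity on the quotient is its part~(d), namely $\iota^r_s(T)\in\CN^r_s(M)\Rightarrow T=0$; and the coincidence of $\iota^r_s$ with $\sigma^r_s$ on smooth fields is its part~(c), $(\iota^r_s-\sigma^r_s)(\mathcal{T}^r_s(M))\subseteq\CN^r_s(M)$, which (again with part~(d)) also makes $\sigma^r_s$ injective modulo $\CN^r_s(M)$. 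Since $\sigma^r_s(S)(\transp,\sk)=S$ is independent of $(\transp,\sk)$, formula~\eqref{gamma} gives $\sigma^r_s(S)\otimes\sigma^t_u(T)=\sigma^{r+t}_{s+u}(S\otimes T)$ and likewise for contraction, so $\sigma$ is an injective algebra homomorphism---intertwining the generalised Lie derivatives by Proposition~\ref{prop4}---and its image is the claimed subalgebra, proving~(c). For the module statement in~(d) the one point still needing an argument is the compatibility $\iota^r_s(fT)=\sigma^0_0(f)\,\iota^r_s(T)$ in $\CG^r_s(M)$ for $f\in C^\infty(M)$: this does not hold in the basic space but only modulo $\CN^r_s(M)$, and I would prove it by the standard Colombeau estimate, expanding $f(y)-f(x)$ near the diagonal where the kernel concentrates and the transport operators reduce to the identity, so that the difference is negligible.

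Finally, clause~(e) is Proposition~\ref{prop4} transported to the quotient: that proposition already establishes $\gLie_X(\iota^r_s(S))=\iota^r_s(\Lie_X S)$ and $\gLie_X(\sigma^r_s(S))=\sigma^r_s(\Lie_X S)$ in the basic space, and because $\gLie_X$ preserves $\CN^r_s(M)$ these identities pass to the induced classes unchanged. Thus the only substantive labour is the derivation property in~(a) and the module compatibility in~(d); every remaining clause is bookkeeping over the propositions cited above.
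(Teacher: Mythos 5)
Your proposal is correct and follows essentially the same route as the paper, which states this theorem purely as a summary of results already established in Sections 2--5 and gives no separate proof. The only points you supply that the paper leaves implicit are the Leibniz rule for $\gLie_X$ with respect to $\otimes$ (obtained, as you say, from the classical Leibniz rule for $\Lie_X$ together with the product rule for $\mathrm{d}_1,\mathrm{d}_2$ applied to the pointwise bilinear map \eqref{gamma}) and the $C^\infty(M)$-module compatibility $\iota^r_s(fT)-\sigma^0_0(f)\,\iota^r_s(T)\in\CN^r_s(M)$, both of which you handle correctly by the standard arguments.
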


We end this section by considering association for tensor fields. The definition of association is much the same as for scalars. 

\begin{definition}[Association]
We say that $[T] \in \CG^r_s(M)$ is associated to 0 (denoted $[T] \approx 0$) if for each $\Psi \in \tCD^s_r(M)$ we have
\[
\lim_{\eps \to 0}\int \tilde
T\gt abcd(\transp_\eps, \sk_\eps)(x)\Psi\gt cdab(x)= 0 
\qquad \forall \sk \in \tilde{\mathcal{A}}(M),\ \forall \transp \in \transp(M).
\]
We say two elements $[S],[T] \in \CG^r_s(M)$ are associated and write $[S] \approx [T]$ if $[S-T] \approx 0$.
\end{definition}

\begin{definition}[Associated distributional tensor field]
 We say $[T] \in \CG^r_s(M)$ admits $S \in \cD'^r_s(M)$ as an associated tensor distribution if for each $\Psi \in \tCD^s_r(M)$ we have
\[
\lim_{\eps \to 0}\int \tilde
T\gt abcd(\transp_\eps, \sk_\eps)\Psi\gt cdab(x)= \langle S,\Psi \rangle
\qquad \forall \sk \in \tilde{\mathcal{A}}(M),\ \forall \transp \in \transp(M).
\]
\end{definition}

Then just as in the scalar case one has the following proposition (with similar proof).

\begin{proposition}
\leavevmode
\begin{enumerate}[label=(\alph*)]
  \item If $S$ is a smooth tensor field in $\mathcal{T}^r_s(M)$ and 
$T \in \cD'^t_u(M)$
then
\begin{equation}
\iota(S)\otimes \iota(T) \approx \iota(S \otimes T). \label{55}
\end{equation}
\item If $S$ and $T$ are  continuous tensor fields then
\begin{equation}
\iota(S)\otimes \iota(T) \approx \iota(S \otimes T). \label{56}
\end{equation}
\end{enumerate}
\end{proposition}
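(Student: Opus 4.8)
The plan is to reduce both statements to the single foundational fact that the embedding of a tensor distribution is associated to that distribution, i.e.\ that for every $U \in \cD'^p_q(M)$ one has $\iota(U) \approx U$ in the sense of the preceding definitions (the tensorial analogue of the scalar result of \cite{paper1}, proved in the same way). Granting this, it suffices in each case to show that $\iota(S)\otimes\iota(T)$ is itself associated to the distributional tensor field $S\otimes T$; since $\iota(S\otimes T)\approx S\otimes T$ as well, transitivity of association then yields $\iota(S)\otimes\iota(T)\approx\iota(S\otimes T)$.

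For part (a) I would first use the fact established above that $(\iota^r_s-\sigma^r_s)(\mathcal{T}^r_s(M))\subseteq\CN^r_s(M)$ to dispose of the smooth factor. Writing $\iota(S)=\sigma(S)+N$ with $N\in\CN^r_s(M)$ and using bilinearity of the tensor product together with the module property of Proposition \ref{submodule} in the form that the tensor product of a negligible field with a moderate one is negligible (cf.\ the discussion preceding Definition \ref{tensorproduct}), one gets $\iota(S)\otimes\iota(T)=\sigma(S)\otimes\iota(T)$ in $\CG^{r+t}_{s+u}(M)$. By \eqref{gamma} a representative of the right-hand side is the pointwise product $S(x)\otimes\tilde T(\transp_\e,\sk_\e)(x)$, so for any $\Psi\in\tCD^{s+u}_{r+t}(M)$ the contraction $\int [S(x)\otimes\tilde T(\transp_\e,\sk_\e)(x)]:\Psi(x)$ can be rewritten, by regrouping indices, as $\int \tilde T(\transp_\e,\sk_\e)(x):\Psi'(x)$, where $\Psi'\coleq S\cdot\Psi\in\tCD^u_t(M)$ is the smooth compactly supported density obtained by contracting the smooth factor $S$ into $\Psi$. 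Since $\langle S\otimes T,\Psi\rangle=\langle T,\Psi'\rangle$ by the definition of the product of a smooth and a distributional field, the convergence $\int\tilde T(\transp_\e,\sk_\e):\Psi'\to\langle T,\Psi'\rangle$, which is precisely $\iota(T)\approx T$ tested against $\Psi'$, gives $\iota(S)\otimes\iota(T)\approx S\otimes T$, as required.

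For part (b) neither factor is smooth, so the reduction via $\sigma$ is unavailable and I would argue by uniform convergence instead. For a continuous tensor field the admissibility condition $\transp^a{}_{b,\e}(x,x)=\delta^a_b$, together with the concentration of the kernels $\sk_\e$ at the base point and the uniform bounds near the diagonal, give, exactly as in the motivating computation showing $\tilde X^a(x)\to X^a(x)$, that $\tilde S(\transp_\e,\sk_\e)\to S$ and $\tilde T(\transp_\e,\sk_\e)\to T$ uniformly on compact sets. Hence the pointwise product $\tilde S(\transp_\e,\sk_\e)(x)\otimes\tilde T(\transp_\e,\sk_\e)(x)$, which by \eqref{gamma} represents $\iota(S)\otimes\iota(T)$, converges to $(S\otimes T)(x)$ uniformly on the compact support of any $\Psi$, so that $\int[\iota(S)\otimes\iota(T)](\transp_\e,\sk_\e):\Psi\to\int(S\otimes T):\Psi=\langle S\otimes T,\Psi\rangle$. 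As $S\otimes T$ is again continuous, $\iota(S\otimes T)\approx S\otimes T$, and transitivity yields \eqref{56}.

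The bilinearity and index-regrouping steps are routine; the genuine content lies in the two analytic inputs invoked above. The main obstacle is establishing the uniform-on-compacts convergence $\tilde T(\transp_\e,\sk_\e)\to T$ for continuous $T$ and, for (a), the association $\iota(T)\approx T$ for distributional $T$. In both cases the transport operator and the smoothing kernel interact: one must control the deviation of $\transp_\e$ from the identity near the diagonal simultaneously with the concentration of $\sk_\e$. This is the nontrivial point deferred to \cite{distcurv,vecreg}, handled there in analogy with the scalar theory of \cite{paper1}; once it is available, the proposition follows by the bookkeeping sketched above.
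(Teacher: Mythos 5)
Your proof is correct and is essentially the argument the paper intends: the paper only remarks that the result follows ``just as in the scalar case (with similar proof)'', and your two reductions --- for (a), discarding $\iota(S)-\sigma(S)$ as negligible and then absorbing the smooth factor $S$ into the test density so that the claim becomes the delta-net property $\int \iota(T)(\transp_\e,\sk_\e)\,\Psi' \to \langle T,\Psi'\rangle$ of Proposition~\ref{convergeinD}; for (b), uniform convergence on compacta of the smoothings of continuous fields --- are precisely the tensorial transcription of that scalar argument. The analytic inputs you defer (control of $\transp_\e$ near the diagonal combined with the concentration of $\sk_\e$) are exactly the ones the paper itself defers to \cite{distcurv,vecreg}.
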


In the scalar case the delts nets of smoothing kernels form a delta-net in the sense that as $\eps \to 0$, $\iota(f)(\sk_\eps) \rightarrow f$ in $\cD'(M)$ for $f \in \cD'(M)$. The following result shows that this remains true in the tensor case.   

\begin{proposition}\label{convergeinD}
Given $S \in \cD'{}^r_s$ then for all $\sk \in \atil$, $\transp \in \transp(M)$ and $\tilde T \in \tilde\cD^s_r(M)$ we have
\[
\lim_{\eps \to 0}\int_{x \in M}\iota{}^r_s(S)(\transp_\e, \sk_\e)(x){\tilde T}(x)=\langle S, \tilde T \rangle,
\]
i.e., $\iota{}^r_s(S)(\transp_\e, \sk_\e) \rightarrow S$ in $\cD'{}^r_s(M)$ as $\eps \to
0$.
\end{proposition}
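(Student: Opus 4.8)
The plan is to reduce the tensorial statement to the scalar delta-net convergence already available from \cite{paper1}, the reduction being made possible by the normalisation $\transp^a{}_{b,\e}(x,x)=\delta^a_b$ of admissible transport operators. First I would use multilinearity together with a partition of unity subordinate to a chart atlas to write an arbitrary test density $\tilde T \in \tCD^s_r(M)$ as a finite sum of \emph{decomposable} terms $\tilde T = \theta^1 \otimes \dots \otimes \theta^r \otimes Y_1 \otimes \dots \otimes Y_s \otimes \mu$, where the $\theta^i \in \form$ are smooth compactly supported covector fields, the $Y_j \in \vect(M)$ smooth compactly supported vector fields, and $\mu$ a smooth compactly supported density. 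Since both sides of the claimed identity are linear in $\tilde T$, it suffices to treat a single such term.

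For such a $\tilde T$, unwinding the definitions \eqref{49} and \eqref{beta} of the embedding, the pairing becomes
\[
\int_{x\in M}\iota^r_s(S)(\transp_\e,\sk_\e)(x)\,\tilde T(x)
=\int_{x\in M}\langle S,\Psi_x^{(\e)}\rangle\,\mu(x),
\]
where $\Psi_x^{(\e)}(y)$ is the type $(s,r)$ density obtained by transporting the frame $\theta^i(x),Y_j(x)$ from $x$ to $y$ via $\transp_\e$ and multiplying by $\sk_{x,\e}(y)$. Next I would commute the $x$-integration with $S$: for fixed $\e$ the map $x\mapsto\Psi_x^{(\e)}\mu(x)$ is a smooth, compactly supported $\tCD^s_r(M)$-valued map (its $y$-supports staying in a common compact set as $x$ ranges over $\supp\mu$, by the support properties of the smoothing kernels), so the $\tCD^s_r(M)$-valued integral $\tilde T_\e \coleq \int_x \Psi_x^{(\e)}\mu(x)$ exists, and $S$ being linear and continuous, the pairing equals $\langle S,\tilde T_\e\rangle$.

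It then remains to prove $\tilde T_\e \to \tilde T$ in $\tCD^s_r(M)$, after which continuity of $S$ gives the result. Writing the components of $\tilde T_\e$ in the chart, one gets $(\tilde T_\e)^{\dots}_{\dots}(y) = \int_x G_\e(x,y)\,\sk_{x,\e}(y)\,\mu(x)$, where the smooth amplitude $G_\e(x,y)$ is the product of the transported frame components and depends on $x$, $y$ and $\e$. The decisive observation is that condition (ii) forces the diagonal value $G_\e(y,y)$ to be independent of $\e$; denoting it $g(y)$, one has $g\,\mu=\tilde T$ in components. Splitting $G_\e(x,y) = G_\e(y,y) + (G_\e(x,y)-G_\e(y,y))$, the first piece contributes $g(y)\int_x\sk_{x,\e}(y)\mu(x)$, which converges to $g(y)\mu(y)$ by the scalar delta-net property of \cite{paper1}; the second piece has an amplitude vanishing on the diagonal and is killed in the limit by the concentration of $\sk_{x,\e}$. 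The same decomposition survives differentiation in $y$, so the convergence holds in every $C^k$-seminorm, with supports controlled uniformly by those of $\mu$ and $\sk_\e$.

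I expect the error term to be the main obstacle. Making ``the amplitude vanishes on the diagonal and is killed in the limit'' precise requires combining the concentration and moment estimates of the smoothing kernels $\sk_\e$ with \emph{uniform} control of $G_\e$ and its $x$- and $y$-derivatives near the diagonal; this uniformity is exactly what condition (i), the uniform boundedness of $(\transp_\e)_\e$ and its derivatives in a neighbourhood of the diagonal, supplies. Equivalently, conditions (i)--(ii) make the family $(G_\e)_\e$ equicontinuous near the diagonal with $\e$-independent diagonal values, so that in the delta-net limit the amplitude may be replaced by its diagonal value and the whole computation collapses onto the scalar case. Bookkeeping the $y$-derivatives of $\transp_\e$ and $\sk_\e$ simultaneously is the only genuinely technical point.
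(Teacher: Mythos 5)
The paper states this proposition without proof (it is offered as the tensorial analogue of the scalar delta-net property of \cite{paper1}, with the detailed arguments living in \cite{distcurv,vecreg}), so there is no in-text argument to compare against. Your overall strategy --- reduce by a partition of unity to decomposable test densities, pull $S$ through the $x$-integral so that everything rests on showing $\tilde T_\e \to \tilde T$ in $\tCD^s_r(M)$, and then use the normalisation $\transp^a{}_{b,\e}(x,x)=\delta^a_b$ to replace the transported-frame amplitude $G_\e(x,y)$ by its $\e$-independent diagonal value --- is the natural route and is essentially what the cited references do. The commutation of $S$ with the vector-valued integral and the identification $g\,\mu=\tilde T$ via condition (ii) are correct.

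The step that fails as literally written is the disposal of the remainder $\int_x \bigl(G_\e(x,y)-G_\e(y,y)\bigr)\,\sk_{x,\e}(y)\,\mu(x)$ in the higher seminorms. Since $S$ is an arbitrary distribution, you need convergence of $\tilde T_\e$ in every $C^k$-seminorm on a fixed compact set. For $k=0$ your estimate works: $\abso{G_\e(x,y)-G_\e(y,y)}=O(\abso{x-y})=O(\e)$ on $\supp\sk_{x,\e}$ by condition (i), while $\int_x\abso{\sk_{x,\e}(y)}\,dx=O(1)$. But each $y$-derivative landing on $\sk_{x,\e}$ costs a factor $\e^{-1}$, so the first derivative of the remainder is only $O(1)$ under this bound and the second already diverges; moreover the Leibniz term in which the derivative hits $G_\e(x,y)-G_\e(y,y)$ no longer vanishes on the diagonal and is also only $O(1)$. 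So ``the same decomposition survives differentiation in $y$'' is not true as a direct concentration estimate. Closing this requires the defining asymptotics of the smoothing kernels --- that $y$-derivatives of $\sk_{x,\e}$ can be traded for $x$-derivatives up to terms of lower order, followed by integration by parts in $x$ onto $\mu$ and the (uniformly bounded) $x$-derivatives of $G_\e$ --- which is the same mechanism that proves the scalar statement for genuine distributions rather than continuous functions. You correctly flag this as the technical crux, but it is not mere bookkeeping: without performing the diagonal splitting \emph{inside} that integration-by-parts structure, rather than before differentiating, the argument does not close.
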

The following is a trivial corollary which generalises the
corresponding scalar result.
\begin{corollary}
At the level of association the embedding does not depend upon the transport operator or the smoothing kernel in the sense that given different admissible nets of transport operators $\transp$ and $\tilde \transp$ and different delta nets of smoothing kernels $\sk$ and $\tilde \sk$ we have
\[
\lim_{\eps \to 0}\int_{x \in M}\iota{}^r_s(S)(\transp_\e, \sk_\eps){\tilde T}(x)=\lim_{\eps \to 0}\int_{x \in
  M}\iota{}^r_s(S)(\tilde \transp_\e, {\tilde \sk}_\eps)(x){\tilde T}(x)
\]
for $S \in \cD'^r_s(M)$ and $\tilde T \in \tilde{\mathcal{D}}^s_r(M)$.
\end{corollary}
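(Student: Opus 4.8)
The plan is to derive this corollary directly from Proposition~\ref{convergeinD}, exploiting the fact that the value to which the embedded net converges is the distribution $S$ itself and is therefore independent of all the smoothing data.

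First I would check that both sets of data appearing in the statement meet the hypotheses of Proposition~\ref{convergeinD}. By assumption $\transp$ and $\tilde\transp$ are both admissible nets of transport operators lying in $\transp(M)$, both $\sk$ and $\tilde\sk$ are delta nets of smoothing kernels in $\atil$, and $\tilde T \in \tilde\cD^s_r(M)$ in either case. Consequently the proposition applies verbatim to the pair $(\transp_\e, \sk_\e)$ and, separately, to the pair $(\tilde\transp_\e, \tilde\sk_\e)$, with no further work required.

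Applying Proposition~\ref{convergeinD} to each net then yields
\[
\lim_{\eps \to 0}\int_{x \in M}\iota{}^r_s(S)(\transp_\e, \sk_\eps)(x){\tilde T}(x)=\langle S, \tilde T\rangle=\lim_{\eps \to 0}\int_{x \in M}\iota{}^r_s(S)(\tilde\transp_\e, {\tilde\sk}_\eps)(x){\tilde T}(x),
\]
which is precisely the claimed equality. There is essentially no obstacle to overcome here: the entire content resides in Proposition~\ref{convergeinD}, which identifies the weak limit of the embedded net with $S$, a quantity manifestly independent of the choice of transport operator and smoothing kernel. The corollary merely records that this independence persists at the level of association, and the proof amounts to a twofold application of that proposition with the common value $\langle S, \tilde T\rangle$ eliminating any dependence on the smoothing data.
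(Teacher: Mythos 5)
Your proof is correct and follows exactly the route the paper intends: the corollary is stated there as a ``trivial corollary'' of Proposition~\ref{convergeinD}, and the argument is precisely the twofold application you give, with both limits equal to the common value $\langle S, \tilde T\rangle$.
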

This shows that if one regards our Colombeau type theory as a method for calculating with smoothed distributional tensor fields, then the distributional limit as $\eps \to 0$  exists for embedded distributions and does not depend on the choice of transport operators or smoothing kernels. 

\section{Generalised Differential Geometry and Applications to 
General Relativity}
In the previous section we established the key structural properties of the generalised tensor algebra where we showed that it is closed under the operations of tensor product and contraction and also closed under the action of the generalised Lie derivative. Furthermore, we showed that there exists a canonical embedding of distributional tensor fields given by $\iota^r_s$ and that this embedding commutes with the Lie derivative.  However, from the point of view of applications the key property of generalised tensor fields is that if $T$ is an element of the basic space $\CEM^r_s(M)$ then for any fixed smoothing kernel $\sk$ and any fixed transport operator $\transp$ the tensor field $\tilde T$ given
by
\[ \tilde T:=T(\transp, \sk) \]
is a smooth tensor field, so that we may apply all the usual operations of smooth differential geometry to it. In particular, we can calculate the covariant derivative of a generalised tensor field. Moreover, one can apply the ordinary Lie derivative of smooth tensor fields for fixed $\transp$ and $\sk$ and define $(\tilde\Lie_X T)(\transp,\sk) \coleq \Lie_X ( T(\transp, \sk))$.

We now look at the covariant derivative of a generalised tensor field in the basic space. Let $\nabla$ be a smooth connection and $Z$ a smooth vector field. For $T \in \CEM^r_s(M)$ we define the generalised tensor field $\nabla_Z T$ to be given by 
\[
(\nabla_ZT)(\transp, \sk):= \nabla_Z( T(\transp, \sk )).
\]
Furthermore, if $T$ is moderate then so is $\nabla_Z T$, and if $T$ is negligible then so is $\nabla_Z T$, so that we may define the covariant derivative of a generalised tensor field to be given by
\[
\nabla_Z[T] \coleq [\nabla_Z T].
\]

\begin{lemma}\label{lemmaa}
Let $S \in \cD'{}^r_s(M)$ be a distributional type $(r,s)$ tensor field, $T \in \tensor^s_r(M)$ a smooth type $(s,r)$ tensor field and $Z \in \vect$ a smooth vector field. Then
\[
\tilde\Lie_Z (\iota{}^r_s(S)^a T_a) \approx \iota{}^0_0(\Lie_Z (S^a T_a)).
\]
\end{lemma}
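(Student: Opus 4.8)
The plan is to establish the associated identity directly from the definition of association, reducing everything to the convergence result of Proposition~\ref{convergeinD} together with the defining duality for the distributional Lie derivative of a scalar. Since, by Proposition~\ref{convergeinD} applied with $r=s=0$, the embedded scalar $\iota^0_0(\Lie_Z(S^a T_a))(\transp_\e,\sk_\e)$ converges to $\Lie_Z(S^a T_a)$ in $\cD'(M)$, it suffices to show that for every compactly supported density $\mu\in\tilde\cD(M)$ and every admissible net $(\transp_\e)_\e$ and delta net $(\sk_\e)_\e$ one has
\[
\lim_{\e\to 0}\int_M \tilde\Lie_Z\bigl(\iota^r_s(S)^a T_a\bigr)(\transp_\e,\sk_\e)(x)\,\mu(x) = \langle \Lie_Z(S^a T_a),\mu\rangle .
\]

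First I would unravel the left-hand side. By definition $\tilde\Lie_Z$ acts fibrewise, so for fixed $\e$ the integrand equals $Z(f_\e)$, where $f_\e \coleq \iota^r_s(S)(\transp_\e,\sk_\e)^a T_a$ is an ordinary smooth scalar field (the fibrewise contraction of the smooth tensor $\iota^r_s(S)(\transp_\e,\sk_\e)$ with the smooth tensor $T$) and the Lie derivative of a scalar reduces to the directional derivative. The next step is integration by parts on the manifold: since $\Lie_Z(f_\e\mu)=Z(f_\e)\,\mu+f_\e\,\Lie_Z\mu$ and the integral over $M$ of the Lie derivative of the compactly supported density $f_\e\mu$ vanishes, we obtain
\[
\int_M Z(f_\e)\,\mu = -\int_M f_\e\,\Lie_Z\mu = -\int_M \iota^r_s(S)(\transp_\e,\sk_\e)(x)\,(T\otimes\Lie_Z\mu)(x),
\]
where $T\otimes\Lie_Z\mu\in\tilde\cD^s_r(M)$ is a compactly supported smooth type $(s,r)$ tensor density. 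Applying Proposition~\ref{convergeinD} to $S$ with this test density yields
\[
\lim_{\e\to 0}\int_M \iota^r_s(S)(\transp_\e,\sk_\e)(x)\,(T\otimes\Lie_Z\mu)(x) = \langle S, T\otimes\Lie_Z\mu\rangle = \langle S^a T_a,\Lie_Z\mu\rangle ,
\]
and comparison with the defining relation $\langle\Lie_Z(S^a T_a),\mu\rangle = -\langle S^a T_a,\Lie_Z\mu\rangle$ for the distributional Lie derivative of the scalar $S^a T_a$ closes the argument.

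The delicate point, and the main thing to justify carefully, is the integration-by-parts identity $\int_M Z(f)\mu = -\int_M f\,\Lie_Z\mu$, which is an instance of the vanishing of the integral of a Lie derivative of a compactly supported density. This is precisely where the global, density-based formulation of tensor distributions (following \cite{marsden}) does the work: it lets one move $Z$ off the generalised factor and onto the smooth test density without any coordinate computation, so that the only analytic input needed is the already-established distributional convergence of Proposition~\ref{convergeinD}.

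An alternative, more computational route would expand $\tilde\Lie_Z = \gLie_Z + d_1(\cdot)(\Lie_Z^{TO}\transp) + d_2(\cdot)(\Lie_Z^{SK}\sk)$, use Proposition~\ref{prop4} to replace $\gLie_Z(\iota^r_s(S))$ by $\iota^r_s(\Lie_Z S)$, and then identify the contractions $\iota^r_s(\Lie_Z S)^a T_a$ and $\iota^r_s(S)^a(\Lie_Z T)_a$ with the embedded scalars via Proposition~\ref{convergeinD}. In that approach the obstacle migrates to showing that the two correction terms involving $d_1$ and $d_2$ are associated to zero; I expect this to be the genuinely hard part of that route, and it is exactly what the direct integration-by-parts argument sidesteps.
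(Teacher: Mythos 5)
Your argument is correct and is essentially the paper's own proof: both move the Lie derivative onto the compactly supported test density by integration by parts and then invoke Proposition~\ref{convergeinD} to identify the limit with $-\langle S^a T_a,\Lie_Z\mu\rangle=\langle\Lie_Z(S^aT_a),\mu\rangle$. The only difference is that you carry out the general $(r,s)$ case directly (with the test density $T\otimes\Lie_Z\mu$), whereas the paper illustrates the computation for a distributional vector field contracted with a smooth 1-form and remarks that the general case is analogous.
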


\begin{proof}
We will illustrate this by considering a distributional vector field $X \in \cD'{}^1_0(M)$ and contracting with $\theta \in \Omega^1(M)$. 
Let $\mu$ be a smooth density of compact support; then by Proposition \ref{convergeinD} we have 
\begin{gather*}
 \lim_{\e \to 0} \int \tilde \Lie_Z ( \iota^1_0(X)^a \theta_a)(x)\mu(x) \\
= - \lim_{\e \to 0} \int \iota^1_0(X)^a(x) \theta_a(x) (\Lie_Z \mu)(x) \\
= - \langle X^a \theta_a, \Lie_Z \mu \rangle = \langle \Lie_Z ( X^a \theta_a), \mu \rangle.
\end{gather*}
On the other hand,
\begin{gather*}
 \lim_{\e \to 0} \int \iota^0_0(\Lie_Z(X^a \theta_a))(x)\mu(x) = \langle \Lie_Z ( X^a \theta_a), \mu \rangle
\end{gather*}
so that
\[
\tilde \Lie_Z(\iota^1_0(X)^a \theta_a) \approx \iota^0_0(\Lie_Z ( X^a \theta_a) ).
\]
The general case is seen similarly.
\end{proof}

\begin{proposition}\label{prop23}
Let $S \in \tensor^r_s(M)$ be a $C^1$ type $(r,s)$ tensor field, $Z \in \vect(M)$ a smooth vector field and $\nabla$ a smooth covariant derivative. Then
\begin{equation}\label{delta}
\nabla_Z (\iota{}^r_s(S)) \approx \iota{}^r_s(\nabla_ZS).
\end{equation}
\end{proposition}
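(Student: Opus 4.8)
The plan is to test the association \eqref{delta} directly against an arbitrary compactly supported density $\Psi \in \tilde\cD^s_r(M)$ and to show that both sides tend to the same distributional value $\langle \nabla_Z S, \Psi\rangle$ as $\e \to 0$; the only genuine input will be the $\cD'$-convergence of embedded nets established in Proposition~\ref{convergeinD}. Since $S$ is $C^1$, its covariant derivative $\nabla_Z S$ is a continuous, hence regular distributional, tensor field, so $\iota^r_s(\nabla_Z S)$ is defined and, by Proposition~\ref{convergeinD}, $\langle \iota^r_s(\nabla_Z S)(\transp_\e,\sk_\e), \Psi\rangle \to \langle \nabla_Z S, \Psi\rangle$. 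This disposes of the right-hand side of \eqref{delta}.

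For the left-hand side, set $\tilde S_\e := \iota^r_s(S)(\transp_\e,\sk_\e)$, a smooth tensor field. By the definition of the covariant derivative on the basic space, $(\nabla_Z \iota^r_s(S))(\transp_\e,\sk_\e) = \nabla_Z \tilde S_\e$ is the ordinary covariant derivative of $\tilde S_\e$. Pairing with $\Psi$ and integrating by parts in the tensor--density duality of \cite{marsden} gives $\langle \nabla_Z\tilde S_\e, \Psi\rangle = -\langle \tilde S_\e, \nabla_Z^{t}\Psi\rangle$, where the formal transpose $\nabla_Z^{t}$ is a first-order linear differential operator with smooth coefficients that maps $\tilde\cD^s_r(M)$ into itself. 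As $\nabla_Z^{t}\Psi$ is then a fixed test density and $\tilde S_\e \to S$ in $\cD'^r_s(M)$ by Proposition~\ref{convergeinD}, we obtain $-\langle \tilde S_\e, \nabla_Z^{t}\Psi\rangle \to -\langle S, \nabla_Z^{t}\Psi\rangle = \langle \nabla_Z S, \Psi\rangle$, the last equality being precisely the definition of the distributional covariant derivative. Comparing with the first paragraph, both sides of \eqref{delta} converge to $\langle \nabla_Z S, \Psi\rangle$ for every $\Psi$, which is the assertion.

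The step needing the most care, and the main obstacle, is the integration by parts behind $\langle \nabla_Z\tilde S_\e,\Psi\rangle = -\langle \tilde S_\e,\nabla_Z^{t}\Psi\rangle$: in the coordinate-invariant density formalism one must check that $\nabla_Z$ agrees with the distributional covariant derivative on smooth fields and that $\nabla_Z^{t}$ preserves smoothness and compact support. The new feature relative to the Lie-derivative computation of Lemma~\ref{lemmaa} is merely a harmless zeroth-order term $(\mathrm{div}_\nabla Z)\Psi$ appearing in $\nabla_Z^{t}\Psi$. This also explains why \eqref{delta} holds only at the level of association: unlike the Lie derivative, which the embedding intertwines exactly (Proposition~\ref{prop4}), the embedding does not respect the connection, and it is only the $\cD'$-continuity of $\nabla_Z$ together with Proposition~\ref{convergeinD} that rescues the identity in the distributional limit. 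An alternative, closer to Lemma~\ref{lemmaa}, would contract with smooth $\theta^i, X_j$ to reduce to scalars, split $\nabla_Z = \Lie_Z + A_Z$ into its Lie part and an algebraic (order-zero) part $A_Z$, treat the Lie part by a Lemma~\ref{lemmaa}-type argument and the $A_Z$ part by compatibility of association with multiplication by smooth tensor fields; I would nonetheless prefer the direct duality argument above.
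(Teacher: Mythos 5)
Your proposal is correct and is essentially the paper's argument: the paper's proof simply invokes the continuity of $\nabla_Z \colon \cD'^r_s(M) \to \cD'^r_s(M)$ together with the $\cD'$-convergence of the embedded net from Proposition~\ref{convergeinD}, and your duality/integration-by-parts computation is precisely that continuity statement unpacked. The only difference is presentational detail, not substance.
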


\begin{proof}
This follows directly from continuity of $\nabla_Z \colon \cD'^r_s(M) \to \cD'^r_s(M)$.
\end{proof}

\begin{remark}
\leavevmode
 \begin{enumerate}[label=(\alph*)]
  \item For a smooth tensor field $S \in \tensor^r_s(M)$ equation \eqref{delta} is true with equality rather than association.
  \item With a suitable definition of distributional covariant derivative (cf.~\cite[Section 3.1]{book}) Proposition \ref{prop23} is true for $S \in \cD'{}^r_s(M)$. 
  \item Give any given coordinate system $x^\mu$ we can define a covariant derivative which is nothing but the partial derivative in these coordinates. Hence in any given coordinate system the above result is also true if we replace $\nabla_Z$ by the partial derivatives $\partial_\mu$.
  \item The above result is also true for all $S \in \cD'{}^r_s(M)$ if we replace $\nabla_Z$ by $\Lie_Z$.
 \end{enumerate}
\end{remark}

Up to now we have discussed the covariant derivative of a generalised tensor field with respect to a smooth classical connection $\nabla$. We now define a generalised version of this. We may do this by writing a generalised covariant derivative as being given by a (smooth) covariant derivative $\delo$ with respect to some background connection $\gamma$ together with a correction term given by a {\it generalised} type $(1,2)$ tensor field $\hat\Gamma^a_{bc}$. Thus, the generalised covariant derivative of a generalised vector field $X$ is given by
\begin{equation}
(\nabla_Z X)^a
=(\delo_Z X)^a+\hat\Gamma^a_{bc}Z^b X^c.
\label{gencovderiv}
\end{equation}
Note that this does not depend on the choice of background connection if we change the tensorial correction term by the difference of the connection coefficients of the background connections. This leads to the following definition:

\begin{definition}[Covariant derivative]
Let $[T]\gt{a_1}{a_r}{b_1}{b_s} \in \CG^r_s(M)$, let $[\hat\Gamma]^a_{bc} \in \CG^1_2(M)$ and let $Z$ be a smooth vector field. Then we may define $\nabla_ZT \in \CG^r_s(M)$ by
\[ \nabla_Z[T]=[\nabla_ZT] \]
where
\begin{equation} \label{71}
\begin{aligned}
(\nabla_ZT)\gt{a_1}{a_r}{b_1}{b_s} &=
\delo_ZT\gt{a_1}{a_r}{b_1}{b_s}+
Z^c(\hat\Gamma^{a_1}_{dc}T\gt{da_2}{a_r}{b_1}{b_s}+\dots
\\ & \quad +\hat\Gamma^{a_r}_{dc}T\gt{a_1}{a_{r-1}d}{b_1}{b_s}
-\hat\Gamma^{d}_{b_1c}T\gt{a_1}{a_r}{db_2}{b_s}-\dots
-\hat\Gamma^{d}_{b_sc}T\gt{a_1}{a_r}{b_1}{b_{s-1}d})
\end{aligned}
\end{equation}
We note that the above definition also makes sense if we replace $Z$ by a generalised vector field $Z$.
\end{definition}

We now turn to the definition of a generalised metric.  Generalised metrics have been considered in the context of the special algebra of tensor fields by \cite{gen-geom}. There, a number of equivalent definitions of a generalised metric are given. We will use the following definition:
\begin{definition}[Generalised metric] 
We say $g_{ab} \in
\CG^0_2(M)$ is a generalised metric
if
\begin{enumerate}[label=(\roman*)]
 \item $g_{ab} = g_{ab}$, i.e., $g$ is symmetric, and
 \item the map $X^a \mapsto X^a g_{ab}$ from $\CG^1_0(M)$ into $\CG^0_1(M)$ is bijective.
\end{enumerate}
\end{definition}

\begin{proposition}
If $g_{ab}$ is a $C^0$ metric then $\tilde g_{ab}=\iota^0_2(g_{ab})$ is a generalised metric.
\end{proposition}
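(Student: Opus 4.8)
The plan is to verify the two defining properties separately. For condition (i), symmetry, I would argue directly at the level of the basic space. By the embedding formula \eqref{46} (equivalently \eqref{49}--\eqref{beta}) with $r=0$, $s=2$, the value $\tilde g_{ab}(\transp,\sk)(x)$ is obtained by pairing $g_{cd}(y)$ with the two transport factors $\transp_a{}^c(x,y)\transp_b{}^d(x,y)$ against $\sk_x(y)$. Interchanging the two lower slots merely relabels these two transport factors, so the symmetry $g_{cd}=g_{dc}$ of the continuous metric forces $\tilde g_{ab}=\tilde g_{ba}$ \emph{exactly}, not merely modulo negligible fields. This reduces the proposition to condition (ii), bijectivity of the index-lowering map $X^a\mapsto X^a\tilde g_{ab}$ from $\CG^1_0(M)$ to $\CG^0_1(M)$.

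For bijectivity the strategy is to construct an inverse generalised tensor field $\tilde g^{ab}\in\CG^2_0(M)$ with $\tilde g^{ab}\tilde g_{bc}=\delta^a_c$ in $\CG^1_1(M)$; once this is available both directions follow by the routine module argument, since if $X^a\tilde g_{ab}$ is negligible then $X^c=X^a\tilde g_{ab}\tilde g^{bc}$ is negligible (moderate times negligible is negligible, by Proposition \ref{submodule}), while $\omega_b\mapsto\omega_b\tilde g^{ba}$ is a right inverse. The analytic input is that the embedding of a continuous metric recovers it uniformly on compacta: for any test object $(\transp_\e,\sk_\e)$ and any compact $K$ one has $\tilde g_{ab}(\transp_\e,\sk_\e)\to g_{ab}$ uniformly on $K$ (this follows from the delta-net property underlying Proposition \ref{convergeinD} together with continuity of $g$, and may be checked scalar-wise via Proposition \ref{saturation}). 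Since a metric is nondegenerate, $\abso{\det g_{ab}}\ge c_K>0$ on $K$, so by continuity of the determinant $\abso{\det\tilde g_{ab}(\transp_\e,\sk_\e)(x)}\ge c_K/2$ on $K$ for all sufficiently small $\e$. Hence for small $\e$ the matrix is invertible with uniformly bounded inverse, and writing the inverse as adjugate over determinant shows, via the quotient rule, that the net of inverses is moderate; the relation $\tilde g^{ab}\tilde g_{bc}=\delta^a_c$ then holds identically along such nets, hence in $\CG^1_1(M)$.

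The main obstacle is promoting this pointwise matrix inversion to a bona fide element of the basic space $\CEM^2_0(M)$, i.e.\ a map on all of $\TO(M)\times\SK(M)$ depending smoothly on $(\transp,\sk)$. The difficulty is that for transport operators and smoothing kernels far from the delta-like regime the smoothed matrix $\tilde g_{ab}(\transp,\sk)(x)$ need not be invertible (averaging bilinear forms of indefinite signature can produce a degenerate form), so naive pointwise inversion is neither defined nor smooth everywhere. I would resolve this with a cutoff: replace $1/\det$ by $\psi(\det\tilde g_{ab}(\transp,\sk))$, where $\psi$ is smooth, bounded, and agrees with $t\mapsto 1/t$ for $\abso{t}$ bounded away from zero, so that the resulting field is globally smooth in $(\transp,\sk)$, moderate, and coincides with the genuine inverse precisely in the regime $\abso{\det}\ge c_K/2$ reached by every admissible net for small $\e$. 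This localisation, together with the verification that the differentials $\mathrm d_1,\mathrm d_2$ entering the moderateness estimate \eqref{mod} remain controlled, is the technical heart of the argument; the remaining moderate/negligible bookkeeping is routine, and I would defer the full details to \cite{distcurv,bigone} as elsewhere in the paper.
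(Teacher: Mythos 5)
Your proposal is correct and follows essentially the same route as the paper: the paper's proof likewise rests on the uniform convergence of $\tilde g_{ab}(\transp_\e,\sk_\e)$ to $g_{ab}$ on compact sets and the construction of the inverse via the cofactor (adjugate over determinant) formula, deferring the technicalities to \cite{distcurv}. Your additional discussion of the cutoff needed to make the inverse a genuine element of the basic space is exactly the kind of detail the paper delegates to that reference.
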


\begin{proof}
This follows from the fact that if $g_{ab}$ is continuous then $\tilde g_{ab}(\transp_\e, \sk_\e)$ converges uniformly to $g_{ab}$ on compact subsets, which allows one to define the inverse metric via the cofactor formula along the lines of \cite{distcurv}.
\end{proof}

\begin{definition}[Generalised Levi-Civita connection] 
Given a generalised metric $g_{ab}$ one may calculate the generalised Levi-Civita connection by defining $\hat \Gamma^a_{bc}$ according to \begin{equation}
\hat\Gamma^a_{bc}=\frac{1}{2} g^{ad}( g_{bd|c}+
g_{cd|b}- g_{bc|d}) \label{80}
\end{equation}
where $g^{ab}$ is defined by $g^{ad}g_{db}=\delta^a_b$ and $g_{bd|c}$ denotes the covariant derivative of $g_{bd}$ with respect to the background connection $\gamma$.
\end{definition}

One now defines the corresponding generalised covariant derivative according to (\ref{71}) using $\hat\Gamma$ defined in equation (\ref{80}) above.
\begin{proposition}
If $g_{ab}$ is a $C^1$ metric then
\[ \hat \Gamma^a_{bc} \approx \iota[(\Gamma^a_{bc}-\gamma^a_{bc}] \]
where $\hat\Gamma^a_{bc}$ is the generalised Levi-Civita connection of the generalised metric $g_{ab}$, $\hat\Gamma^a_{bc}$ is the Levi-Civita connection of $g_{ab}$ and $\gamma^a_{bc}$ are the connection coefficients of the background connection $\gamma$.
\end{proposition}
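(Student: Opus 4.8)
The plan is to evaluate the representative of $\hat\Gamma^a_{bc}$ obtained by inserting a generic admissible net $(\transp_\e,\sk_\e)$ and to show that it converges, in the sense of distributions, to the continuous tensor field $\Gamma^a_{bc}-\gamma^a_{bc}$; since association against all $\Psi\in\tCD$ is exactly such a distributional limit (cf.\ Proposition \ref{convergeinD}), this is what the claim amounts to. The starting point is the purely algebraic identity, valid pointwise for any $C^1$ metric and any torsion-free background connection $\gamma$,
\[
\Gamma^a_{bc}-\gamma^a_{bc}=\tfrac12\,g^{ad}\bigl(g_{bd|c}+g_{cd|b}-g_{bc|d}\bigr),
\]
where $|$ denotes $\delo$: the $\gamma$-terms hidden inside the covariant derivatives recombine, using the symmetry of $\gamma$, to produce precisely $-\gamma^a_{bc}$. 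Because $g$ is $C^1$ the right-hand side is a genuine $C^0$ field, so it suffices to show that
\[
\begin{aligned}
\hat\Gamma^a_{bc}(\transp_\e,\sk_\e)=\tfrac12\,\tilde g^{ad}(\transp_\e,\sk_\e)\bigl(&\delo_c\tilde g_{bd}(\transp_\e,\sk_\e)+\delo_b\tilde g_{cd}(\transp_\e,\sk_\e)\\
&-\delo_d\tilde g_{bc}(\transp_\e,\sk_\e)\bigr)
\end{aligned}
\]
converges weakly to it, for every $\transp$ and every delta net $\sk$.

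Next I would treat the two kinds of factor separately. Since $g$ is in particular $C^0$, the proposition preceding the definition of the generalised Levi-Civita connection gives $\tilde g_{ab}(\transp_\e,\sk_\e)\to g_{ab}$ uniformly on compacta, and the cofactor formula then upgrades this to the \emph{uniform} (hence strong) convergence $\tilde g^{ad}(\transp_\e,\sk_\e)\to g^{ad}$ of the generalised inverse to the continuous inverse metric. For the derivative factors I would invoke Proposition \ref{prop23} together with the subsequent Remark: as $g$ is $C^1$, the generalised covariant derivative of the embedded metric is associated to $\iota$ of the continuous field $\delo_c g_{bd}$, i.e.\ $\delo_c\tilde g_{bd}(\transp_\e,\sk_\e)\to\delo_c g_{bd}$ in $\cD'$. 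The $C^1$ hypothesis enters a second time here to supply a uniform local bound on these derivative nets: after the integration by parts that transfers the $x$-derivative of the kernel onto the $C^1$ data (as carried out in \cite{distcurv}), the nets $\delo_c\tilde g_{bd}(\transp_\e,\sk_\e)$ are bounded in $L^\infty_{\mathrm{loc}}$ uniformly in $\e$.

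The decisive step, and the main obstacle, is passing to the limit in the product of the strongly convergent inverse metric with the only weakly convergent covariant derivatives; this is exactly where association rather than equality is forced, since products do not commute with the distributional limit. I would isolate it in the elementary lemma that if $A_\e\to A$ uniformly on compacta with $A$ continuous and $B_\e\to B$ in $\cD'$ with $(B_\e)$ bounded in $L^\infty_{\mathrm{loc}}$, then $A_\e B_\e\to AB$ in $\cD'$: for a test density $\mu$ one writes
\[
\int A_\e B_\e\,\mu=\int(A_\e-A)B_\e\,\mu+\int B_\e\,(A\mu),
\]
where the first term vanishes by the uniform decay of $A_\e-A$ against the $L^1$-bounded $B_\e$, and the second converges because the uniform bound lets one extend the weak convergence of $B_\e$ from smooth test functions to the continuous compactly supported function $A\mu$ by approximation. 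Applying this with $A_\e=\tilde g^{ad}(\transp_\e,\sk_\e)$ and $B_\e$ equal to each of the three derivative nets, and summing, yields
\[
\hat\Gamma^a_{bc}(\transp_\e,\sk_\e)\longrightarrow\tfrac12\,g^{ad}\bigl(\delo_c g_{bd}+\delo_b g_{cd}-\delo_d g_{bc}\bigr)=\Gamma^a_{bc}-\gamma^a_{bc}
\]
in $\cD'$, which is precisely $\hat\Gamma^a_{bc}\approx\iota(\Gamma^a_{bc}-\gamma^a_{bc})$. The analytic heart is thus the strong–weak product passage together with the uniform bound on the smoothed derivatives; the opening algebraic identity and the uniform convergence of the inverse metric are comparatively routine.
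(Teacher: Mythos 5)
Your proposal is correct and follows essentially the same route as the paper: the paper's entire proof is the one-line assertion that $\iota[g^{ad}]\,\iota[g_{bd}]_{|c}\approx\iota[g^{ad}g_{bd|c}]$ for a $C^1$ metric, which is exactly the strong--weak product compatibility you establish in detail via your lemma. You merely supply the ingredients the paper leaves implicit --- the uniform convergence of the generalised inverse metric and the $L^\infty_{\mathrm{loc}}$ bound on the smoothed covariant derivatives.
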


\begin{proof}
The proof follows from the fact that for a $C^1$ metric
\begin{equation}
\iota[g^{ad}]\iota[g_{bd}]_{|c} \approx \iota[g^{ad}g_{bd|c}].
\end{equation}
\end{proof}

We next consider the generalised curvature of a generalised connection.

\begin{definition}[Generalised curvature]
Let $\hat\nabla$ be a generalised connection. We may define a type $(1,3)$ generalised curvature tensor $\hat {R^a}_{bcd} \in \CG^1_3(M)$ by
\[
(\hat\nabla_X\hat\nabla_Y-\hat\nabla_Y\hat\nabla_X-
\hat\nabla_{[X,Y]})Z=\hat R(X,Y)Z
\]
where $X$, $Y$ and $Z$ are smooth vector fields. 
\end{definition}

\begin{proposition}
Let $\Gamma^a_{bc}$ define a differentiable connection $\nabla$ and let $\hat\Gamma^a_{bc}$, given by $\hat\Gamma^a_{bc}=\iota[(\Gamma^a_{bc}- \gamma^a_{bc})]$,  be used in equation (\ref{gencovderiv}) to define the generalised connection $\hat \nabla$. Then,
 \[ \hat {R^a}_{bcd} \approx \iota[{ R^a}_{bcd}], \]
i.e., the generalised curvature of the embedded connection $\hat \nabla$ is associated to the embedding of the curvature of $\nabla$.
\end{proposition}

Combining this with our earlier result on connections we have the following result.

\begin{proposition}
If $g_{ab}$ is a $C^2$ metric then
\[ \tilde {R^a}_{bcd} \approx \iota[{ R^a}_{bcd}] \]
where $\tilde {R^a}_{bcd}$ is the generalised curvature of the generalised Levi-Civita connection of $\tilde g_{ab}$ and $R^a{}_{bcd}]$ is the curvature of the standard Levi-Civita connection of $g_{ab}$.
\end{proposition}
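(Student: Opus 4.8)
The plan is to combine the two preceding propositions, but to upgrade the connection-level association $\hat\Gamma \approx \iota[\Gamma-\gamma]$ to a genuine uniform ($C^1$) convergence, which is what one needs in order to push the \emph{nonlinear} curvature operation through association. Write $\hat\Gamma^a_{bc}$ for the generalised Levi-Civita connection of $\tilde g = \iota^0_2(g)$ defined by \eqref{80}, and $\Gamma^a_{bc}$ for the classical Levi-Civita connection of $g$. Since $g$ is $C^2$, $\Gamma$ is $C^1$ and $R^a{}_{bcd}$ is continuous, so the previous proposition (on the curvature of an embedded differentiable connection) applies to $\Gamma$ and gives that the generalised curvature built from $\iota[\Gamma-\gamma]$ is associated to $\iota[R^a{}_{bcd}]$. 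By transitivity of association it therefore suffices to show that the generalised curvature $\tilde R$ built from $\hat\Gamma$ is associated to the generalised curvature built from $\iota[\Gamma-\gamma]$; equivalently, that when tested against densities $\Psi \in \tCD^3_1(M)$ their difference tends to zero.

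First I would record the analytic input. For a $C^2$ metric the embedding converges with two derivatives, $\tilde g_{ab}(\transp_\e,\sk_\e) \to g_{ab}$ in $C^2_{\mathrm{loc}}$ (the $C^k$ refinement of the uniform convergence used in the $C^0$ case, established in \cite{distcurv,vecreg}). By the cofactor formula the generalised inverse then converges to $g^{ab}$ in $C^2_{\mathrm{loc}}$, and since \eqref{80} expresses $\hat\Gamma$ algebraically through $g^{ad}$ and the background covariant derivatives $g_{bd|c} = \delo_c \tilde g_{bd}$ — that is, through the metric and its first derivatives only — we obtain $\hat\Gamma^a_{bc}(\transp_\e,\sk_\e) \to (\Gamma-\gamma)^a_{bc}$ in $C^1_{\mathrm{loc}}$. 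On the other hand, embedding the $C^1$ field $\Gamma-\gamma$ gives $\iota[\Gamma-\gamma](\transp_\e,\sk_\e) \to \Gamma-\gamma$ in $C^1_{\mathrm{loc}}$ as well. Hence the difference net
\[ D^a_{bc,\e} \coleq \hat\Gamma^a_{bc}(\transp_\e,\sk_\e) - \iota[\Gamma-\gamma]^a_{bc}(\transp_\e,\sk_\e) \]
tends to zero in $C^1_{\mathrm{loc}}$, i.e.\ $D_\e$ and $\delo D_\e$ both converge to zero uniformly on compact sets.

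Next I would compare the two curvatures. Using \eqref{71} to express the generalised curvature of a correction field through the (smooth) background curvature $R[\delo]$, the terms $\delo\hat\Gamma$, and the quadratic terms $\hat\Gamma\cdot\hat\Gamma$, the background curvature cancels in the difference, leaving $\delo D_\e$-type terms and quadratic terms of the schematic form $\hat\Gamma_\e D_\e + D_\e\,\iota[\Gamma-\gamma]_\e$. The derivative terms tend to zero uniformly because $\delo D_\e \to 0$ uniformly; the quadratic terms tend to zero uniformly because $D_\e \to 0$ uniformly while the connection nets $\hat\Gamma_\e$ and $\iota[\Gamma-\gamma]_\e$ are uniformly bounded on compacts (being $C^1_{\mathrm{loc}}$-convergent). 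Thus the curvature difference converges to zero uniformly on compacts, hence distributionally, so $\tilde R^a{}_{bcd} \approx \iota[R^a{}_{bcd}]$ as required.

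The main obstacle is precisely the step that forces the $C^2$ rather than merely $C^1$ hypothesis: association is not stable under the curvature operation, since a product of two generalised functions each associated to zero need not be associated to zero, and the quadratic term $\hat\Gamma\cdot\hat\Gamma$ together with the first-order term $\delo\hat\Gamma$ make the map $\Gamma \mapsto R$ genuinely nonlinear and first order. The only way I see to control $\hat\Gamma_\e D_\e$ and $\delo D_\e$ is to have $D_\e$ converge to zero together with its first derivatives \emph{uniformly}, rather than merely in the sense of association, and this uniform $C^1$ convergence of the connection rests on the $C^2_{\mathrm{loc}}$ convergence of the embedded metric. Verifying that the embedding of a $C^2$ tensor field does converge in $C^2_{\mathrm{loc}}$ — the regularity of the smoothing operator in the presence of the transport operator and on a general manifold — is the technically demanding ingredient that must be imported from \cite{distcurv,vecreg}.
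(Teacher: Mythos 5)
Your proposal is correct, and it follows the same route the paper indicates --- the paper offers no written proof at all, merely the remark that the result follows by ``combining'' the two preceding propositions (the association $\hat\Gamma \approx \iota[\Gamma-\gamma]$ for a $C^1$ metric, and the compatibility $\hat R \approx \iota[R]$ for an embedded differentiable connection). What you add, and what the paper silently glosses over, is the observation that this combination is \emph{not} a formal transitivity argument: the curvature map $\Gamma \mapsto R$ is first order and quadratic in $\Gamma$, and association is not stable under products or under differentiation of the associated objects. Your resolution --- upgrading the connection-level association to genuine $C^1_{\mathrm{loc}}$ convergence of the nets $\hat\Gamma(\transp_\e,\sk_\e)$ and $\iota[\Gamma-\gamma](\transp_\e,\sk_\e)$ to the common limit $\Gamma-\gamma$, which rests on $C^2_{\mathrm{loc}}$ convergence of the embedded metric and the cofactor formula for the inverse --- is exactly the mechanism used in the references the paper leans on (\cite{distcurv}, and the analogous arguments in \cite{SVgt,book}), and your schematic decomposition of the curvature difference into $\delo D_\e$ terms and bilinear terms controlled by uniform bounds is sound. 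The one ingredient you correctly flag as imported rather than proved --- that the embedding of a $C^k$ tensor field converges in $C^k_{\mathrm{loc}}$ in the presence of the transport operator --- is indeed the technically substantive input, and citing \cite{distcurv,vecreg} for it is consistent with how the paper itself handles such regularity statements.
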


By contraction we may define $\tilde R_{bd}=\tilde {R^a}_{bad}$, $\tilde R=\tilde g^{bd} \tilde R_{bd}$ and $\tilde G_{ab}=\tilde R_{ab}-\half\tilde g_{ab}\tilde R$. Then the above result gives the following proposition:

\begin{proposition}
If $g_{ab}$ is a $C^2$-solution of the vacuum Einstein equations $G_{ab}=0$ then 
\[ \tilde G_{ab} \approx 0. \]
\end{proposition}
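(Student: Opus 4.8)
The plan is to prove the stronger statement that, for any $C^2$ metric, $\tilde G_{ab}\approx\iota^0_2(G_{ab})$, and then to invoke the vacuum hypothesis $G_{ab}=0$: since $\iota$ is linear and $\iota^0_2(0)=0$, this immediately yields $\tilde G_{ab}\approx 0$. Because $\tilde G_{ab}=\tilde R_{ab}-\half\tilde g_{ab}\tilde R$, it suffices to identify the distributional limits of the two summands $\tilde R_{ab}$ and $\tilde g_{ab}\tilde R$ with $\iota^0_2(R_{ab})$ and $\iota^0_2(g_{ab}R)$ respectively. The two ingredients I would use are the preceding proposition $\tilde{R^a}_{bcd}\approx\iota[{R^a}_{bcd}]$ (valid since $g$ is $C^2$) together with continuity of contraction, and the fact, established in the proof of the $C^0$-metric proposition, that for a continuous metric both $\tilde g_{ab}(\transp_\e,\sk_\e)$ and its inverse $\tilde g^{ab}(\transp_\e,\sk_\e)$ converge uniformly on compacta to the continuous fields $g_{ab}$ and $g^{ab}$.

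First I would treat the Ricci term. The contraction $\tilde R_{bd}=\tilde{R^a}_{bad}$ inherits the association $\tilde R_{bd}\approx\iota^0_2(R_{bd})$ directly, since at the level of association contraction amounts to pairing $\tilde{R^a}_{bcd}(\transp_\e,\sk_\e)$ against a smooth test density that encodes the trace, and the preceding proposition already controls all such pairings. Next I would handle the scalar curvature $\tilde R=\tilde g^{bd}\tilde R_{bd}$. Testing against a smooth density $\phi$ and writing $\int\tilde g^{bd}\tilde R_{bd}\,\phi=\int\tilde R_{bd}\,(\tilde g^{bd}\phi)$, I would split $\tilde g^{bd}\phi=g^{bd}\phi+(\tilde g^{bd}-g^{bd})\phi$ and pass to the limit in each piece, using $\tilde R_{bd}\approx\iota^0_2(R_{bd})$ on the main term and the uniform convergence $\tilde g^{bd}\to g^{bd}$ on the correction term. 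This gives $\tilde R\approx\iota^0_0(g^{bd}R_{bd})=\iota^0_0(R)$, which is exactly the product argument already used in the $C^1$ connection proposition to establish $\iota[g^{ad}]\iota[g_{bd}]_{|c}\approx\iota[g^{ad}g_{bd|c}]$. Repeating the identical argument once more, now with the uniformly convergent factor $\tilde g_{ab}$, yields $\tilde g_{ab}\tilde R\approx\iota^0_2(g_{ab}R)$. Combining the two summands gives $\tilde G_{ab}\approx\iota^0_2(R_{ab}-\half g_{ab}R)=\iota^0_2(G_{ab})$, and the vacuum condition completes the proof.

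The main obstacle is precisely the step where association meets multiplication, since in a Colombeau-type theory the embedding does not commute with products and association is in general not preserved under multiplication. What rescues the argument is the asymmetry between the two factors: one factor ($\tilde g_{ab}$ or $\tilde g^{bd}$) converges \emph{uniformly} on compacta to a continuous limit, while the other ($\tilde R_{bd}$) converges only in $\cD'$. To push the product through, one needs the weak convergence $\tilde R_{bd}\to R_{bd}$ to hold against merely continuous (not just smooth) test densities, and one needs the correction term $\int\tilde R_{bd}\,(\tilde g^{bd}-g^{bd})\phi$ to vanish, which requires a uniform $L^1_{\mathrm{loc}}$ bound on $\tilde R_{bd}(\transp_\e,\sk_\e)$. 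Both follow from the fact that the Riemann tensor of a $C^2$ metric is continuous, so its smoothings are locally uniformly bounded; this is the technical content already underlying the preceding curvature proposition, whose detailed verification I would import from \cite{distcurv}.
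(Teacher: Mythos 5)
Your proposal is correct and follows essentially the same route as the paper, which states this proposition as an immediate corollary of the preceding result $\tilde{R^a}_{bcd}\approx\iota[{R^a}_{bcd}]$ for $C^2$ metrics, obtained via contraction, multiplication by the (uniformly convergent) embedded metric and its inverse, and the vacuum condition. Your extra care about why association survives the products $\tilde g^{bd}\tilde R_{bd}$ and $\tilde g_{ab}\tilde R$ — one factor uniformly convergent, the other weakly convergent with local bounds — is exactly the technical content the paper delegates to \cite{distcurv} and leaves implicit.
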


Thus, if we have have a $C^2$-solution of the vacuum Einstein equations then the embedded metric $\tilde g_{ab}$ also satisfies the Einstein equations at the level of association (although the Bianchi identities hold at the level of equality). The important thing to note about this is that it suggests that for generalised metrics the appropriate version of the Einstein equations is
\[ \tilde G_{ab} \approx 8\pi \tilde T_{ab} \]
where $\tilde T_{ab}$ is the embedding of some distributional energy-momentum tensor. This is in the spirit of the `coupled calculus' approach of \cite{col1} where one performs the algebraic operations and derivatives in the differential algebra $\hCG(M)$, but solves the differential equations at the level of association.

We now consider the case where  $g_{ab}$ is not $C^2$ but satisfies the weaker regularity conditions of Geroch and Traschen \cite{GT} which guarantee the existence of a distributional curvature $R^a_{bcd}$. We show that with some additional technical conditions that guarantee that $\iota^0_2(g_{ab})$ is indeed a generalised metric, $\tilde G_{ab}$ is associated to the embedding of the distributional energy tensor defined by $R^a_{bcd}$. 

\begin{definition}[Geroch Traschen regularity]
A symmetric tensor $g_{ab}$ is called a {\em gt-regular metric} if it is a metric almost everywhere and $g_{ab}$ and $g^{ab}$ are in $\Ll^\infty\cap\Hl^1$.
\end{definition}

In the above definition $\Ll^\infty$ denotes the space of locally bounded functions and $\Hl^1$ denotes the Sobolev space of functions which are locally square integrable and also have locally square integrable first (weak) derivative.  Note that although the above definition appears to be stronger than that in \cite{GT} it is actually equivalent to the original one (see \cite{SVgt} for details). The fact that a gt-regular metric is only defined almost everywhere causes some difficulties. In \cite{SVgt} a class of nondegenerate and stable gt-regular metrics was introduced and it was shown that if these are smoothed componentwise by a suitable class of mollifiers then the curvature of the smoothing $\tilde g^\eps_{ab}$ tends to the (distributional) curvature of $g_{ab}$ in $\cD'$. Rather than go into the complications of defining a nondegenerate and stable metric in the present context we will instead follow \cite{GT} (especially Theorem 4) and work with the slightly larger class of continuous gt-regular metrics. One can then show that given a continuous gt-regular metric we can either derive the (distributional) Riemann curvature $\Riem[g]$ of the gt-regular metric $g_{ab}$ or embed $g_{ab}$ in the algebra to obtain the generalised metric $\tilde g_{ab}$. If we then derive its curvature $\Riem[\tilde g]$ within the generalised setting we find that it is associated with the distributional curvature $\Riem[g]$. This is depicted in the following diagram
\[
\begin{CD}
\Ll^\infty\cap\Hl^1
\ni g_{ab} @>\iota^0_2>>[\tilde g_{ab}]\in\CG^0_2(M)\\
@V\mbox{$\cD'$}VV @VV\mbox{Colombeau}V\\
\Riem[g]@<\approx<<\Riem[\tilde g]
\end{CD}
\] 
More precisely, we have the following
theorem.
\begin{theorem}[Compatibility for the Riemann curvature]\label{gtthm}
Let $g_{ab}$ be a continuous gt-regular metric with Riemann tensor $\Riem[g]$. Let $\tilde g_{ab}:=\iota^0_2(g_{ab})$ be the generalised metric obtained by embedding in the algebra. Then
\[ \Riem[\iota^0_2(g)] \approx \Riem[g]. \]
\end{theorem}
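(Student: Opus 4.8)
The plan is to reduce this statement, which compares an object computed inside the algebra with a distribution, to a purely classical convergence statement about the Riemann curvature of the smoothed metric, and then to invoke the Geroch--Traschen stability theorem.

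\textbf{First reduction.} I would fix an admissible net of transport operators $(\transp_\e)_\e$ and a delta net of smoothing kernels $(\sk_\e)_\e$, and set $g^\e_{ab} \coleq \iota^0_2(g)(\transp_\e, \sk_\e)$, which for small $\e$ is a smooth nondegenerate metric. The key observation is that the generalised curvature, evaluated at $(\transp_\e, \sk_\e)$, is nothing but the ordinary Riemann curvature of $g^\e$. Indeed, by the standard identity expressing the Levi-Civita connection covariantly with respect to the torsion-free background connection $\gamma$, formula \eqref{80} evaluated at $(\transp_\e, \sk_\e)$ gives
\[ \hat\Gamma^a_{bc}(\transp_\e, \sk_\e) = \Gamma[g^\e]^a_{bc} - \gamma^a_{bc} \]
up to a negligible net: the generalised inverse metric differs from the pointwise matrix inverse $(g^\e)^{-1}$ only by a negligible amount, and since $\hat\Gamma$ depends on it linearly and the curvature polynomially in $\hat\Gamma$ and its derivatives, this discrepancy remains negligible. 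Hence the generalised connection $\delo + \hat\Gamma$ evaluated at $\e$ is precisely the Levi-Civita connection of $g^\e$, and its generalised curvature is $\Riem[g^\e]$ up to a negligible net. As negligible nets are associated to zero, it therefore suffices to prove that $\Riem[g^\e] \to \Riem[g]$ in $\cD'$ as $\e \to 0$.

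\textbf{Convergence of the smoothing.} Because $g$ is continuous, the proposition on $C^0$ metrics already gives $g^\e \to g$ uniformly on compact sets, and by the technical conditions ensuring that $\iota^0_2(g)$ is a generalised metric, $(g^\e)^{-1} \to g^{-1}$ uniformly on compacts as well. The essential additional ingredient I would establish is convergence of first derivatives, namely $g^\e \to g$ in $\Hl^1$, i.e.\ $\partial g^\e \to \partial g$ in $\Ll^2$. To obtain this I would work in a chart and compare the coordinate-invariant smoothing \eqref{46} with an ordinary mollification $g * \mol_\e$. Since $(\transp_\e)_\e$ is admissible, the transport factors satisfy $\transp^a{}_{b,\e}(x,x) = \delta^a_b$ and are uniformly bounded together with their derivatives near the diagonal, so they differ from the identity by a factor of order $\abso{x-y}$ on the support of $\sk_{x,\e}$; together with the delta-net asymptotics this should show that the transport insertions and the non-convolution character of $\sk_\e$ contribute only terms vanishing in $\Ll^2$ in the limit. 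The Geroch--Traschen hypothesis $g \in \Hl^1$ is exactly what is needed for $(\partial g) * \mol_\e \to \partial g$ in $\Ll^2$.

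\textbf{Convergence of the curvature and the main obstacle.} Writing schematically $\Riem[g^\e] = \partial \Gamma[g^\e] + \Gamma[g^\e] \cdot \Gamma[g^\e]$ with $\Gamma[g^\e] \sim (g^\e)^{-1}\partial g^\e$, the previous step gives $\Gamma[g^\e] \to \Gamma[g]$ in $\Ll^2$, using uniform convergence of $(g^\e)^{-1}$ and $\Ll^2$-convergence of $\partial g^\e$. For the quadratic terms, continuity of the product $\Ll^2 \times \Ll^2 \to \Ll^1$ then yields convergence in $\Ll^1$, hence in $\cD'$; for the $\partial\Gamma$ terms I would test against a compactly supported density and integrate by parts, reducing to the $\Ll^1$ (a fortiori $\Ll^2$) convergence of $\Gamma[g^\e]$. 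The resulting limit is precisely the distribution $\Riem[g]$ of Geroch and Traschen, so this reproduces the mechanism of \cite[Theorem 4]{GT} for the present smoothing. The hard part is the $\Hl^1$ convergence of the second step: the smoothing \eqref{46} is not a plain convolution, and both the transport operator and the $x$-dependence of $\sk_x(y)$ obstruct the naive interchange of differentiation and integration, so the classical mollifier estimate cannot simply be quoted. The real work lies in showing that, after differentiating $g^\e_{ab}(x)$ under the integral, the error terms generated by $\partial_x \transp_\e$ and by the variation of $\sk_{x,\e}$ in $x$ are controlled in $\Ll^2$ by the $\Hl^1$ bound on $g$ and the asymptotic estimates on admissible transport nets and delta nets from \cite{paper1, distcurv}, and hence do not survive in the limit.
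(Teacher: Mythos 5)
Your proposal takes essentially the same route as the paper: the paper's proof is a two-line sketch that reduces the theorem to reproducing the estimates of \cite{SVgt} (uniform convergence of $g^\e$ and its inverse, $\Ll^2$ convergence of $\partial g^\e$, and the resulting $\cD'$-convergence of the curvature) for the present non-convolution smoothing, using the local form of the smoothing kernel and the fact that $\transp_\e(x,x)=\id$. You have correctly reconstructed and expanded exactly that argument, including singling out the genuinely nontrivial step --- controlling in $\Hl^1$ the discrepancy between the transport-operator smoothing and a plain mollification --- which is precisely the point the paper defers to the cited estimates.
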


\begin{proof} 
Since $g_{ab}$ is continuous $\iota^0_2(g_{ab})$ defines a generalised metric. We may then obtain the estimates used in deriving the corresponding result in \cite{SVgt} by working with the local form of the smoothing kernel and the transport operators together with the fact that $\transp(x,x)={\rm id}$.
\end{proof}

The following corollary is immediate.
\begin{proposition}\label{gtregular} 
If $g_{ab}$ is a continuous gt-regular metric that satisfies the vacuum Einstein equations then $\tilde g_{ab}=\iota^0_2(g_{ab})$ is a generalised metric which satisfies 
\[ \tilde G_{ab} \approx 0. \]
\end{proposition}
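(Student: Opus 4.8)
The plan is to obtain $\tilde G_{ab} \approx 0$ by feeding the Riemann compatibility of Theorem~\ref{gtthm} through the contractions and products that build the Einstein tensor, keeping careful track of which of these operations respect association. First I would record that since $g_{ab}$ is continuous, the proposition that a $C^0$ metric embeds to a generalised metric applies, so $\tilde g_{ab} = \iota^0_2(g_{ab})$ is a genuine generalised metric; moreover $\tilde g_{ab}(\transp_\e,\sk_\e)$ and the inverse $\tilde g^{ab}(\transp_\e,\sk_\e)$ converge uniformly on compact sets to the continuous fields $g_{ab}$ and $g^{ab}$. By Theorem~\ref{gtthm} the net $\tilde R^a{}_{bcd}(\transp_\e,\sk_\e)$ converges in $\cD'$ to the distributional Riemann tensor $\Riem[g]$.

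The passage to the Ricci tensor is then harmless, since it is a contraction: testing $\tilde R_{bd} = \tilde R^a{}_{bad}$ against a density $\Psi^{bd}$ amounts to testing $\tilde R^a{}_{bcd}$ against $\delta^c_a \Psi^{bd}$, so association is inherited and $\tilde R_{bd} \approx R_{bd}$, the distributional Ricci tensor of $g$.

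The substantive part of the argument is the nonlinear step, because the scalar curvature $\tilde R = \tilde g^{bd}\tilde R_{bd}$ and the term $\tilde g_{ab}\tilde R$ are genuine products of generalised fields, and association does not in general commute with products. Here I would exploit the uniform convergence of the metric factors, writing $\int \tilde g^{bd}(\transp_\e,\sk_\e)\,\tilde R_{bd}(\transp_\e,\sk_\e)\,\Psi = \int \tilde R_{bd}(\transp_\e,\sk_\e)\,(g^{bd}\Psi) + \int \tilde R_{bd}(\transp_\e,\sk_\e)\,(\tilde g^{bd}(\transp_\e,\sk_\e)-g^{bd})\,\Psi$ and treating the two pieces separately: the first as a pairing of the convergent curvature net with a fixed object built from $g^{bd} \in \Ll^\infty\cap\Hl^1$, the second as an error term killed by the uniform smallness of $\tilde g^{bd}(\transp_\e,\sk_\e)-g^{bd}$. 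The justification that these products converge to the correct distributional limits is exactly the Geroch--Traschen estimate, controlled by the $\Ll^\infty\cap\Hl^1$ regularity together with the normalisation $\transp(x,x)=\mathrm{id}$ --- the same machinery (from \cite{SVgt}) invoked in the proof of Theorem~\ref{gtthm}. This yields $\tilde R \approx g^{bd}R_{bd} = R$ and $\tilde g_{ab}\tilde R \approx g_{ab}R$, hence
\[ \tilde G_{ab} = \tilde R_{ab} - \half\,\tilde g_{ab}\tilde R \approx R_{ab} - \half\, g_{ab}R = G_{ab}. \]

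Finally, since $g_{ab}$ is assumed to satisfy the vacuum Einstein equations, its distributional Einstein tensor $G_{ab}$ vanishes, so $\tilde G_{ab} \approx 0$. I expect the main obstacle to be precisely the nonlinear products of the third paragraph: one must verify that the merely continuous (non-smooth) limit $g^{bd}\Psi$ can legitimately be paired against the curvature net, which is not automatic for a general distribution and rests essentially on the Geroch--Traschen regularity making $g^{bd}R_{bd}$ a well-defined distribution. By contrast, the contraction to Ricci and the uniform convergence of $\tilde g_{ab}$ and $\tilde g^{ab}$ are the comparatively easy ingredients that make the whole combination commute with association.
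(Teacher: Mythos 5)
Your proposal is correct and follows the route the paper intends: the paper offers no written proof at all, simply declaring the proposition an ``immediate'' corollary of Theorem~\ref{gtthm}, and your argument is exactly the natural elaboration of that deduction --- inheriting association under contraction and then controlling the nonlinear products $\tilde g^{bd}\tilde R_{bd}$ and $\tilde g_{ab}\tilde R$ via uniform convergence of the metric and its inverse together with the $\Ll^\infty\cap\Hl^1$ estimates of \cite{SVgt}. You also correctly locate the only genuinely delicate point (pairing the curvature net against the merely continuous $g^{bd}\Psi$ and killing the error term, which needs the $\Hl^1$ convergence of the smoothed metric and not just its uniform convergence, since the curvature net is not uniformly $L^1$-bounded), and you rightly defer that to the Geroch--Traschen machinery already invoked in the proof of Theorem~\ref{gtthm}.
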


Moving beyond the class of gt-regular metrics it is of considerable interest to find the weakest conditions on $g_{ab}$ which guarantee that $\tilde G_{ab}$ is associated to a (conventional) distribution, so that the source admits a distributional interpretation. We know from the example of conical singularities \cite{cvw,VW1} that it is possible to have metrics which do not satisfy the Geroch and Traschen regularity conditions, but all the same have a distributional energy-momentum tensor. We briefly review this work in the context of the present manifestly coordinate invariant theory.

In \cite{cvw} it was shown that if one computed the scalar curvature density of a cone in $\Real^2$ in Cartesian coordinates it was associated to a delta distribution $\delta^{(2)}(x,y)$ with a numerical factor that depended on the deficit angle. In a subsequent paper (see \cite{VW1}) it was furthermore shown that if one transforms the metric to a new coordinate system the generalised scalar curvature density is associated to the transformed delta distribution. 

In the present paper we have shown that one can embed $g_{ab}$ into the Colombeau algebra $\hCG(M)$ in a manifestly coordinate invariant way.
We now show that, for the case of a 2-dimensional cone, the scalar curvature is associated to a delta distribution.
We outline the calculation below.

In Cartesian coordinates the metric of the two dimensional cone with deficit angle $2(1-A)\pi$ may be written as
\begin{align*}
   g_{ab} & = \tfrac12 (1+A^2) \delta_{ab} + \tfrac12 (1-A^2) m_{ab} \\ 
    m_{ab} & = \begin{pmatrix} {x{}^2-y{}^2\over x{}^2+y{}^2} & {2xy\over
   x{}^2+y{}^2} \\
   {2xy\over x{}^2+y{}^2} & -{x{}^2-y{}^2\over x{}^2+y{}^2}\\
   \end{pmatrix}
\end{align*}
Since $\delta_{ab}$ is already smooth and $A$ is a constant, the only term we need to smooth for embedding the metric into $\check\CG(M)$ is $m_{ab}$.

To show that the scalar curvature $\widetilde R_\e$ of $\tilde g_{ab,\e} = \tilde g(\transp_\e, \sk_\e)$ converges in the sense needed for association, one writes the pairing with a smooth 2-form of compact support in local coordinates as
\begin{gather*}
\int \tilde R_\e \omega(x) \sqrt{\abso{\tilde g_\e(x)}} \,dx \\
= \omega(0,0) \int \tilde R_\e(x) \sqrt{\abso{\tilde g_\e(x)}}\, dx + \int \int_0^1 \tilde R_\e(x) (D\omega)(tx)x \sqrt{\abso{\tilde g_\e(x)}}\,dt \,dx.
\end{gather*}
While the first integral on the right-hand side can easily be evaluated using the Gauss-Bonnet theorem to give the desired result, we need precise estimates for the components $\tilde g_{ab,\e}$ of the regularized metric to show that the second integral vanishes for $\e \to 0$. For this one looks at the integrand inside and outside a neighborhood of zero whose diameter is propertional to $\e$, say $\e R_0$. In the inside one can directly employ homogeneity of the components of the metric and the $L^1$-conditions on $(\sk_\e)_\e$ to obtain the needed estimate. For the outside, one has to find an expression for the constant $C$ appearing in the estimate (away from the origin)
\[ \abso{\partial^\alpha \tilde g_{ab,\e}(x) - \partial^\alpha g_{ab}(x)} \le C \e^q \]
in terms of derivatives of $g_{ab}$, which is again combined with homogeneity of the metric to obtain
\begin{equation} \tR_\eps = \begin{cases}
   O(1/\eps^2) & \text{if $r<\eps R_0$} \\
   O(\eps/r^3) & \text{if $r>\eps R_0$.}\\ \end{cases}
\end{equation}

With this one obtains that given any smooth 2-form $\mu$ of compact support one has
\begin{equation}
\lim_{\eps \to 0} \int \tilde R_\eps \mu = 4\pi(1-A)\langle
\delta^{(2)}, \mu \rangle
\end{equation}
which shows that the generalised scalar curvature is associated to a delta distribution. A similar calculation (but requiring more delicate estimates) can be carried out for the Ricci curvature of a 4-dimensional cone along the lines of those in Wilson \cite{wilson} which gives the following result.  

\begin{proposition}
Let $g_{ab}$ be the conical
metric given in standard cylindrical polar coordinates by
$$
ds^2=dt^2-dr^2-A^2r^2d\phi^2-dz^2
$$
then 
\begin{equation}
\tilde G_{ab} \approx 8\pi \tilde T_{ab}
\end{equation}
where $\tilde T_{ab}$ is the embedding into the Colombeau algebra of
the energy momentum tensor of a cosmic string with delta-function
terms with singular support on the string and with the stress equal to
the density $\mu=2\pi(1-A)$.
\end{proposition}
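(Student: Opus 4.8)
The plan is to exploit the product structure of the metric so as to reduce the tensorial problem to the transverse two-dimensional cone treated above. In the plane orthogonal to the string we pass to Cartesian coordinates $x=r\cos\phi$, $y=r\sin\phi$, in which the metric splits as the direct sum of the flat Lorentzian two-metric $dt^2-dz^2$ on the worldsheet and minus the Euclidean cone $dr^2+A^2r^2\,d\phi^2$ on the $(x,y)$-plane; a direct check shows that the latter is exactly the $g_{ab}$ of the preceding two-dimensional discussion. This identifies the expected answer: all curvature is concentrated on the worldsheet $\{x=y=0\}$ and is governed by the transverse Gauss curvature.

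First I would work in the basic space, where for each fixed $\transp_\e$ and $\sk_\e$ the embedded metric $\tilde g_{ab}(\transp_\e,\sk_\e)$ is an ordinary smooth tensor field whose Einstein tensor $\tilde G_{ab,\e}$ may be computed classically. The crucial observation is that in the limit the only surviving curvature is the transverse scalar $\tilde R_\e=2\tilde K_\e$, and that the transverse components of the Einstein tensor appear as a difference $\tilde R_{\mu\nu}-\half\tilde g_{\mu\nu}\tilde R$ whose two terms each behave like $\tilde K_\e\,\tilde g_{\mu\nu}$ and hence cancel, while on the worldsheet $R_{tt}=R_{zz}=0$ so that $\tilde G_{tt}=-\half\tilde g_{tt}\tilde R_\e$ and $\tilde G_{zz}=-\half\tilde g_{zz}\tilde R_\e$. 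Here $g_{tt}=1$ and $g_{zz}=-1$ are smooth constants, so the surviving components multiply the scalar curvature by a smooth factor only; this is precisely why $\tilde G_{ab}$ possesses a genuine distributional limit even though $g_{ab}$ lies outside the Geroch-Traschen class and the dangerous product $\tilde g_{\mu\nu}\tilde R$ of a merely continuous metric with a delta is individually ill defined.

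To pass to the limit I would use that the metric coefficients, being continuous, converge locally uniformly, and that multiplication of a locally uniformly convergent net by a distributionally convergent net commutes with the limit, which is the analytic content underlying the association result \eqref{56}. Applied to the smooth worldsheet factors $\tilde g_{tt,\e}$ and $\tilde g_{zz,\e}$ multiplying $\tilde R_\e$, this reduces the claim for the worldsheet components to the convergence of $\tilde R_\e$ alone, which is exactly the two-dimensional result established above, namely $\tilde R_\e\sqrt{\abso{\tilde g_\e}}\to 4\pi(1-A)\,\delta^{(2)}$. For the transverse components, where the individual terms cannot be separated at the level of association, one must instead show directly that the difference $\tilde G_{\mu\nu,\e}$ is negligible, repeating the two-dimensional estimate component by component: splitting the pairing against a compactly supported test density into the region $r<\e R_0$ and its complement, using homogeneity of the cone together with the $L^1$-bounds on $(\sk_\e)_\e$ inside, and the decay $\tilde R_\e=O(\e/r^3)$ outside.

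The hard part will be these component-wise estimates for the full four-dimensional Ricci tensor, carried out along the lines of \cite{wilson}. One must control each $\tilde R_{ab,\e}$ simultaneously with the smoothed inverse metric $\tilde g^{ab}_\e$ that enters the contraction to the scalar, show that the spurious contributions produced by a general admissible transport operator cancel (so that the limit is in particular independent of the choice of $\transp$ and $\sk$, which for the nonlinear curvature is not guaranteed a priori), and verify that the transverse Einstein components are indeed negligible. Granting these estimates, the transverse Gauss curvature integrates to the deficit angle $2\pi(1-A)$, and matching the resulting coefficient of $\delta^{(2)}$ in the surviving worldsheet components of $\tilde G_{ab}$ against $8\pi\tilde T_{ab}$ for the boost-invariant cosmic-string energy-momentum tensor supported on the string fixes the density $\mu=2\pi(1-A)$ and completes the proof.
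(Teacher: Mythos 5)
Your sketch is correct and follows essentially the same route as the paper, which itself only asserts that the result follows from ``a similar calculation (but requiring more delicate estimates) \dots along the lines of those in Wilson'': namely, use the product structure to reduce everything to the transverse two-dimensional scalar curvature, identify the surviving $tt$ and $zz$ components of $\tilde G_{ab}$, and defer the component-wise Ricci estimates (including control of the transport-operator contributions) to Wilson-type arguments. You correctly flag that the classical cancellations do not hold identically after regularisation and must be recovered as negligibility estimates, which is exactly where the paper's ``more delicate estimates'' live.
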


{\bfseries Acknowledgments.} E. Nigsch was supported by the Austrian Science Fund (FWF) grants P26859 and P30233.

\label{lastpage}

\begin{thebibliography}{99}

\bibitem{paper1}
Nigsch EA, Vickers JA. 2019  Nonlinear generalised functions on manifolds.
  Preprint.

\bibitem{marsden}
Marsden JE. 1968  {Generalized Hamiltonian mechanics. A mathematical exposition
  of non-smooth dynamical systems and classical Hamiltonian mechanics.}. {\em
  {Arch. Ration. Mech. Anal.}} \textbf{28}, 323--361.

\bibitem{VW2}
Vickers JA, Wilson JP. 1998  {A nonlinear theory of tensor distributions}. {\em
  ESI-Preprint (available electronically at
  http://www.esi.ac.at/ESI-Preprints.html)} \textbf{566}.

\bibitem{advances2}
Grosser M, Kunzinger M, Steinbauer R, Vickers JA. 2012  A global theory of
  algebras of generalized functions. {II}. {Tensor} distributions. {\em New
  York J. Math.} \textbf{18}, 139--199.

\bibitem{distcurv}
Nigsch EA. 2019  Spacetimes with distributional semi-Riemannian metrics and
  their curvature. Submitted.

\bibitem{penrose}
Penrose R. 1968  {Structure of Space-Time}. In DeWitt C, Wheeler J, editors,
  {\em {Battelle Rencontres}} pp. 121--235 New York. WA Benjamin.

\bibitem{KM}
Kriegl A, Michor PW. 1997 {\em The convenient setting of global analysis}.
Number~53 in {Mathematical Surveys and Monographs}. Providence, RI: American
  Mathematical Society.

\bibitem{specfull}
Grosser M, Nigsch EA. 2018  Full and special {C}olombeau Algebras. {\em Proc.
  Edinb. Math. Soc} \textbf{61}, 961--994.

\bibitem{vecreg}
Nigsch EA. 2016  On regularization of vector distributions on manifolds. {\em
  Forum Math.} \textbf{28}, 1131--1141.

\bibitem{book}
Grosser M, Kunzinger M, Oberguggenberger M, Steinbauer R. 2001 {\em Geometric
  theory of generalized functions with applications to general relativity}.
Number 537 in Mathematics and its Applications. Dordrecht: Kluwer Academic
  Publishers.

\bibitem{bigone}
Nigsch EA. 2016  Nonlinear generalized sections of vector bundles. {\em J.
  Math. Anal. Appl.} \textbf{440}, 183--219.

\bibitem{gen-geom}
Kunzinger M, Steinbauer R. 2002  Generalized pseudo-Riemannian geometry. {\em
  {Trans. Am. Math. Soc.}} \textbf{354}, 4179--4199.

\bibitem{col1}
Colombeau JF. 1984 {\em New generalized functions and multiplication of
  distributions}.
Number~84 in North-Holland Mathematics Studies. Amsterdam: North-Holland
  Publishing Co.

\bibitem{GT}
Geroch R, Traschen J. 1987  Strings and other distributional sources in general
  relativity. {\em Phys. Rev. D (3)} \textbf{36}, 1017--1031.

\bibitem{SVgt}
Steinbauer R, Vickers JA. 2009  On the Geroch-Traschen class of metrics. {\em
  {Classical Quantum Gravity}} \textbf{26}, 19.

\bibitem{cvw}
Clarke C, Vickers J, Wilson J. 1996  Generalized functions and distributional
  curvature of cosmic strings. {\em Classical Quantum Gravity} \textbf{13},
  2485--2498.

\bibitem{VW1}
Vickers J, Wilson J. 1999  Invariance of the distributional curvature of the
  cone under smooth diffeomorphisms. {\em Classical Quantum Gravity}
  \textbf{16}, 579--588.

\bibitem{wilson}
Wilson JP. 1997  Distributional curvature of time dependent cosmic strings.
  {\em {Classical Quantum Gravity}} \textbf{14}, 3337--3351.

\end{thebibliography}
\end{document}